\numberwithin{equation}{section}
\let\cal\mathcal
\def\Ascr{{\cal A}}
\def\Bscr{{\cal B}}
\def\Cscr{{\cal C}}
\def\Escr{{\cal E}}
\def\Fscr{{\cal F}}
\def\Iscr{{\cal I}}
\def\Lscr{{\cal L}}
\def\Mscr{{\cal M}}
\def\Oscr{{\cal O}}
\def\Qscr{{\cal Q}}
\def\Vscr{{\cal V}}
\def\Wscr{{\cal W}}
\let\blb\mathbb
\def\GG{{\blb G}}
\def \PP{{\blb P}}
\def \AA{{\blb A}}
\def \ZZ{{\blb Z}}
\def\Id{\operatorname{id}}
\def\pr{\mathop{\text{pr}}\nolimits}
\def\Der{\operatorname{Der}}
\def\mod{\operatorname{mod}}
\def\gr{\operatorname{gr}}
\def\coh{\mathop{\text{\upshape{coh}}}}
\def\gr{\operatorname {gr}}
\def\Spec{\operatorname {Spec}}
\def\GL{\operatorname {GL}}
\def\PGL{\operatorname {PGL}}
\def\Hom{\operatorname {Hom}}
\def\uHom{\operatorname {\mathcal{H}\mathit{om}}}
\def\End{\operatorname {End}}
\def\Sl{\operatorname {Sl}}
\def\End{\operatorname {End}}
\def\Gal{\operatorname {Gal}}
\def\rk{\operatorname {rk}}
\def\Pic{\operatorname {Pic}}
\def\r{\rightarrow}
\DeclareMathOperator{\HHom}{\mathcal{H}\mathit{om}}
\DeclareMathOperator{\HEnd}{\mathcal{E}\mathit{nd}}
\DeclareMathOperator{\ram}{ram}
\DeclareMathOperator{\Br}{Br}
\let\dirlim\injlim
\let\invlim\projlim
\newtheorem{lemma}{Lemma}[section]
\newtheorem{proposition}[lemma]{Proposition}
\newtheorem{theorem}[lemma]{Theorem}
\newtheorem{corollary}[lemma]{Corollary}
\theoremstyle{definition}
\newtheorem{example}[lemma]{Example}
\theoremstyle{remark}
\newtheorem{remark}[lemma]{Remark}
\newdimen\uboxsep \uboxsep=1ex
\def\uboxn#1{\vtop to 0pt{\hrule height 0pt depth 0pt\vskip\uboxsep
\hbox to 0pt{\hss #1\hss}\vss}}
\def\uboxs#1{\vbox to 0pt{\vss\hbox to 0pt{\hss #1\hss}
\vskip\uboxsep\hrule height 0pt depth 0pt}}
\def\index{\operatorname{index}}
\def\period{\operatorname{period}}
\def\Ram{\operatorname{Ram}}
\def\PGl{\operatorname{PGl}}
\def\Gl{\operatorname{Gl}}
\def\Az{\operatorname{Az}}
\def\Vect{\operatorname{Vect}}
\def\cl{\operatorname{cl}}
\def\uEnd{\operatorname {\mathcal{E}\mathit{nd}}}
\let\SL\Sl
\let\GL\Gl
\let\PGL\PGl
\def\Sch{\operatorname{Sch}}
\title[Notes on de Jong's theorem]{Notes on de Jong's period$=$index theorem for central simple
algebras over fields of transcendence degree two}
 \email{michel.vandenbergh@uhasselt.be} 
\author{Michel Van den Bergh}
\address{Departement WNI\\Universiteit Hasselt\\ Universitaire Campus\\ Building D\\ 3590 
Diepenbeek\\ Belgium} 
\begin{document}
\maketitle
\section{Introduction}
These are notes on de Jong's proof of the period$=$index theorem
over fields of transcendence degree two. They are actually about the
``simplified'' proof sketched by de Jong in the last section of his
paper. These notes were meant as support for my lectures at the
summer school ``Central Simple Algebras over Function Fields of
Surfaces'' at the Universit\"at Konstanz between August, 26 and
September, 1 2007 but I did not finish them in time.\footnote{The
  other lecturers were Philippe Gille, Andrew Kresch, Max Lieblich,
  Tam\'as Szamuely and Jan Van Geel.}

\medskip

No originality is intended here. I have  copied mostly from the following
sources \nocite{CT1,OPA,AdJ}
\begin{itemize}
\item M. Artin and A.~J. de~Jong, {\em Stable orders over surfaces}, manuscript 2004, 198 pages. 
\item J.-L. Colliot-Th{\'e}l{\`e}ne, {\em Alg\`ebres simples centrales
    sur les corps de fonctions de deux variables (d'apr\`es {A}. {J}.
    de {J}ong)}, Ast\'erisque (2006), no.~307, Exp. No. 949, ix,
  379--413, S\'eminaire Bourbaki. Vol.  2004/2005.
\item A.~J. de~Jong, {\em The period-index problem for the {B}rauer group of an
  algebraic surface}, Duke Math. J. {\bf 123} (2004), no.~1, 71--94.
\item M. Ojanguren and R. Parimala, {\em   Smooth finite splittings of
Azumaya algebras over surfaces.} http://www.mathematik.uni-bielefeld.de/lag/man/244.ps.gz.
\end{itemize}
I wish to thank Johan de Jong for answering some questions and for
sending me his manuscript with Mike Artin on stable orders.

\medskip

\noindent
\textbf{Disclaimer and apology } These notes are not very systematic.
In particular there are some inconsistencies of notation and the level
of detail is very uneven. Several parts are directly copied from the
references listed above.  Some parts consider subjects covered by other
lecturers. Since these notes were mostly written before the
summer school my presentation here is often different in content and
notation from the actual lectures during the summer school. I apologize
for this.

\medskip

\noindent
\textbf{Notations and conventions } Unless otherwise specified all cohomology
is etale cohomology, $k$ is an algebraically closed field and
 $n$ is a number which is prime to the characteristic. 

As over
fields we define the degree of an Azumaya algebra as the square root of its
rank (assuming it has constant rank). 
\section{De Jong's theorem}
Here is de Jong's celebrated result.
\begin{theorem} \cite{dJ} If $K$ is a field of transcendence degree
  $2$ and if $A$ is a central simple algebra over $K$ whose Brauer
  class has order prime to the characteristic then
\[
\index(D)=\period(D)
\]
\end{theorem}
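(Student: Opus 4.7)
The strategy is to convert the arithmetic statement into a geometric existence problem for twisted sheaves on a surface, and then to attack existence by a degeneration argument.

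First I would reduce to the case $n=\period(\alpha)$ is a prime power $\ell^r$ with $\ell\neq\charact k$, by decomposing $\alpha=[A]$ into its primary parts. Since $K$ has transcendence degree $2$ over the algebraically closed field $k$, there is a smooth projective surface $X/k$ with $k(X)=K$. By Gabber's purity for the Brauer group in codimension two, together with the theorem $\Br(X)=\Br'(X)$ for smooth projective surfaces, the class $\alpha$ extends to a class in $\Br(X)$ of the same period $n$, represented by an Azumaya algebra $\Ascr$. The key reformulation is: $\index(\alpha)\mid n$ if and only if there exists a locally free $\alpha$-twisted sheaf $E$ on $X$ of rank exactly $n$. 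Indeed, given such an $E$, the sheaf $\uEnd(E)$ is an untwisted Azumaya algebra on $X$ of degree $n$ with Brauer class $-\alpha$, whose generic fiber is a central simple $K$-algebra of degree $n$ in the class $-\alpha$; the reverse divisibility $\period\mid\index$ is automatic.

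To produce such an $E$ I would study the moduli stack $\Mscr=\Mscr(X,\alpha;n,c)$ of locally free $\alpha$-twisted sheaves of rank $n$ with prescribed determinant and second Chern class $c$. The hypothesis $\gcd(n,\charact k)=1$ ensures that the associated $\mu_n$-gerbe has well-behaved cohomology, so $\Mscr$ is a reasonable Artin stack with smooth deformation theory; a Riemann-Roch calculation shows its expected dimension becomes non-negative once $c$ is sufficiently large. The aim is then to exhibit a geometric point of $\Mscr$. For this I would use a degeneration: construct a one-parameter family $\Xscr\to\spec R$ over a DVR $R$, with generic fiber $X$ and a suitably chosen reducible or nodal special fiber $X_0$, together with an Azumaya algebra extending $\Ascr$. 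On each component of $X_0$ the Brauer class $\alpha_0$ should become simpler, ideally trivial, so that a rank-$n$ twisted sheaf can be built componentwise out of ordinary rank-$n$ bundles; gluing these along the double curves yields a locally free $\alpha_0$-twisted sheaf of rank $n$ on $X_0$, and semicontinuity / specialization for $\Mscr$ then lifts this to a point over the generic fiber.

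The main obstacle is the degenerate construction itself: choosing a degeneration such that (i) $\alpha$ extends across the total space, (ii) the restrictions to the components of $X_0$ are manageable, and (iii) the gluing across the double curves is compatible with the $\mu_n$-gerbe. The transcendence-degree-two hypothesis is what makes (iii) tractable, since the double locus is then a disjoint union of curves, and the obstruction to gluing lives in a cohomology group that can be controlled by Tsen-type vanishing for curves over $k$. The coprimality condition $\gcd(n,\charact k)=1$ enters both to ensure smoothness of the deformation theory of $\Mscr$ and to provide a clean theory of twisted sheaves via $\mu_n$-gerbes, without which both the moduli-theoretic and the gluing steps would break down.
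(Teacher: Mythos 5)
There is a serious gap at the very start. You assert that the class $\alpha\in\Br(k(X))$ ``extends to a class in $\Br(X)$,'' invoking purity. This is false in general: purity says that a class unramified in codimension one extends, but a generic element of $\Br(k(X))$ has nontrivial ramification along a divisor in $X$, and such a class has no extension to $\Br(X)$ whatsoever. By making this assertion you have silently reduced to the \emph{unramified} case, which is only the first half of de Jong's theorem. The paper's Section 12 is devoted entirely to the ramified case: one passes to a cyclic degree-$\ell$ cover $Y\to X$ ramified along a normal crossings divisor containing the ramification divisor of $\alpha$ (and linearly equivalent to an $\ell$-th multiple), resolves the resulting $A_{\ell-1}$ singularities, and shows via the coniveau/Bloch--Ogus sequence and Proposition \ref{ref-5.4-7} that $\alpha$ becomes unramified on the resolution $\tilde Y$; one then runs the unramified case on a one-parameter family interpolating between $\tilde Y$ and (an open of) $X$, and specializes back using Theorem \ref{ref-3.1-0}. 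None of this appears in your proposal, and it is precisely where the transcendence-degree-two hypothesis and the normal-crossings/surface geometry do the heavy lifting.

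Even in the unramified case, your route is different from the paper's. The paper constructs, via Artin's Theorem \ref{ref-6.1-12}, a smooth splitting cover $Y_s$ of a chosen Azumaya representative $\Ascr$, deforms $X$ (birationally) to $Y_s$ in a family over $\AA^1$ (Proposition \ref{ref-10.1-19}), puts an Azumaya algebra of the correct $\mu_n$-class and rank $n$ on the special fiber $Y_s$ (using Lemma \ref{ref-4.2-2} and Tsen), kills the $H^2$ obstruction by an elementary transformation (Theorem \ref{ref-8.4-17}), lifts along the family by deformation theory plus Grothendieck existence plus Artin approximation (Proposition \ref{ref-9.1-18}), and specializes the index back down via the discrete valuation of the fiber over $1$ (Theorem \ref{ref-3.1-0}). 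Your proposal instead argues via moduli of $\alpha$-twisted sheaves and a degeneration to a reducible or nodal surface where $\alpha$ becomes trivial on components; this is closer in flavour to Lieblich's independent proof (which, however, fibers $X$ over $\PP^1$ and uses rational connectedness of twisted-sheaf moduli over $k(t)$), and the key existence step---that a suitable degeneration with the properties (i)--(iii) you list actually exists and that the gluing obstruction really vanishes---is not supplied. You should either flesh out that degeneration in detail, or adopt the paper's concrete construction; in either case you must first address the ramified case, since that is where the theorem actually lives.
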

We start with some observations and comments.
\begin{enumerate}
\item Max Lieblich \cite{Lieblich} has proved that the period-index
  problem over function fields can be lifted to characteristic zero.
  Hence the condition that the period must be prime to the
  characteristic can be removed. However in these notes we stick to de
  Jong's original setup.
\item It is easy to reduce the period-index problem to the case that
$K$ is finitely generated. 
\item If $K$ is finitely generated then by resolution of singularities
we may assume that $K$ is the function field of a smooth projective
connected surface $X$ (notation: $K=k(X)$). 
\item If $K=k(X)$ as above then one may first consider the case that
$D$ is in the image of $\Br(X)$. This is the so-called ``unramified case''. 
De Jong's proof deals first with the unramified case. The proof uses
a deformation argument which is very specific for surfaces. 
\item The ``ramified'' case in de Jong's proof is treated by an
  extremely ingenious reduction to the unramified case.
\item In \cite{Lieblich} Max Lieblich gives a new and completely
  different proof of de Jong's theorem which starts by (birationally)
  fibering the surface $X$ over $\PP^1$. He then rephrases the
  period-index problem as an existence problem for certain twisted
  vector bundles on curves over $k(t)$.  The corresponding moduli
  space is (geometrically) rationally connected (rational even) and in this
way 
  one can recover the period-index theorem from
(deep) results about the existence of rational points on
  rationally connected varieties. See \cite{dJS}.
\item Yet another proof is by Starr and de Jong \cite{dJ2}. Here one reduces the problem
to the existence of rational points in certain twisted Grassmanians over the function
field of a surface. 
\end{enumerate}
\section{Specialization}
De Jong's proof uses several times a specialization argument for the index. We
discuss this first.
\begin{theorem} \label{ref-3.1-0} Assume that $L/K$ is a field extension and assume
that there is discrete valuation ring in $L$ which contains $K$, whose
quotient field is $L$ and whose residue field is $K$. Let $D$
be a central division algebra over $K$. Then 
\[
\index(L\otimes_K D)=\index(D)
\]
\end{theorem}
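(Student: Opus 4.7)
\emph{Proof plan.} The direction $\index(L\otimes_K D)\le \index(D)$ is classical and holds for every field extension: a degree-$\index(D)$ splitting field $M/K$ of $D$ gives a commutative $L$-algebra $L\otimes_K M$ that splits $L\otimes_K D$, and any residue field at a maximal ideal is a splitting field of $L\otimes_K D$ of degree at most $\index(D)$. Hence only the reverse inequality $\index(D)\le\index(L\otimes_K D)$ needs argument.

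My plan is to run a specialisation argument with the Severi--Brauer scheme. Let $R\subset L$ denote the given discrete valuation ring with residue field $K$, and consider the Azumaya $R$-algebra $\Lambda:=D\otimes_K R$, whose generic and special fibres are $L\otimes_K D$ and $D$ respectively. I would then pass to the associated Severi--Brauer scheme $P:=\mathrm{SB}(\Lambda)$, a smooth projective $R$-scheme whose formation commutes with base change, so that the generic and special fibres $P_L$ and $P_K$ are $\mathrm{SB}(L\otimes_K D)$ and $\mathrm{SB}(D)$. The essential input at this point is the standard Severi--Brauer dictionary: for a central simple algebra $A$ over a field $F$ and a finite extension $F'/F$, the field $F'$ splits $A$ iff $\mathrm{SB}(A)$ has an $F'$-point; in particular $\index(A)$ equals the smallest residue degree at a closed point of $\mathrm{SB}(A)$.

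Set $n:=\index(L\otimes_K D)$ and choose a closed point $x\in P_L$ with $[\kappa(x):L]=n$. Let $Z\subset P$ be the scheme-theoretic closure of $\{x\}$; it is integral and dominates $\spec R$, hence $R$-flat (a torsion-free module over a DVR is flat). The generic fibre $Z_L=\spec\kappa(x)$ has $L$-length $n$, so by flatness and properness of $Z$ over $R$ the special fibre $Z_K\subset P_K=\mathrm{SB}(D)$ is a proper $0$-dimensional $K$-scheme of length $n$. Any closed point $z\in Z_K$ therefore satisfies $[\kappa(z):K]\le \length_K(Z_K)=n$, while the inclusion $z\in\mathrm{SB}(D)(\kappa(z))$ forces $\kappa(z)$ to split $D$. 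Consequently $\index(D)\le n$, as required.

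The only non-formal ingredient is the existence and base-change compatibility of the Severi--Brauer scheme of an Azumaya algebra over a general base; once that is granted, the rest is a clean flat-family length computation over a DVR, so I do not anticipate any substantial obstacle.
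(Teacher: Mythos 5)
Your argument is correct, and it takes a genuinely different route from the paper's. The paper filters $A = R\otimes_K D$ by the $m$-adic filtration of the DVR $R$, observes $\gr A \cong D[t]$ is a domain, hence $A$ and $L\otimes_K D$ are domains, so $L\otimes_K D$ is a division algebra and its index equals its degree $= \deg D = \index D$. That gives both inequalities simultaneously and in fact proves the stronger statement that $L\otimes_K D$ remains a division algebra, with essentially no machinery beyond the associated graded ring; the paper then remarks that the same trick generalizes to regular local $R$. Your route splits into the easy inequality $\index(L\otimes_K D)\le\index(D)$ (valid for any extension) and a specialization argument for the reverse: you spread $D$ out to the Azumaya algebra $D\otimes_K R$, take the relative Severi--Brauer scheme $P$, and use flatness and properness of the closure of a closed point of minimal residue degree on the generic fiber to produce a closed point of $\mathrm{SB}(D)$ of degree $\le \index(L\otimes_K D)$. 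This requires more input (existence and base-change of $\mathrm{SB}$ over a base, the dictionary between splitting fields and rational points, a properness-of-image and finite-flat-over-a-DVR length count), but it is the standard geometric specialization template and makes clear how the hypothesis that $R$ has quotient field $L$ and residue field $K$ is exactly what lets one transport points of the Brauer--Severi variety from the generic to the special fiber. Note that your argument yields only the index equality and not the division-algebra conclusion that falls out of the paper's graded-ring computation; for the theorem as stated this is enough. One small point worth making explicit: to conclude that the special fibre $Z_K$ is zero-dimensional of length $n$ you should observe that $Z\to\Spec R$ is proper with finite generic fibre and hence, being a one-dimensional integral scheme dominating $\Spec R$, has zero-dimensional special fibre too; then $Z\to\Spec R$ is finite, $H^0(Z,\Oscr_Z)$ is $R$-free of rank $n$, and the length computation follows.
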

\begin{proof} Let $m$ be the maximal ideal of $R$. Put $A=R\otimes_K D$
and filter $A$ by the $m$-adic filtration. Then $\gr A=(\gr R)\otimes_K D=D[t]$.
Hence $\gr A$ is a domain and thus so is $L\otimes_K D=L\otimes_R A$. 
Thus $L\otimes_K D$ is a division algebra and therefore
\[
\index(L\otimes_K D)=\deg(L\otimes_K D)=\deg(D)=\index(D)
\]
finishing the proof. 
\end{proof}
\begin{remark} It is easy to see that this theorem and its proof generalize to the
  case that $R$ is regular local ($\gr A$ becomes a polynomial
  ring in several variables). However the theorem is false without the
  assumption of $R$ being regular. Consider the following counter
  example. Let $D$ be arbitrary (of index $>1$) and let $X$ be the Brauer-Severi variety of $D$. It is
  well-known that $X$ has a very ample line bundle $\Lscr$. Let $R$ be
  the local ring of the corresponding cone at the singular vertex.
  Then the quotient field $L$ of $R$ is $k(X)[t]$. Hence $L$ splits
  $D$ and thus $\index(L\otimes_K D)=1<\index(D)$.
\end{remark}
\section{Invariants}
Let $X$ be a scheme. It is classical that
\[
H^1(X,\GG_m)=\Pic(X)
\]
The \emph{cohomological Brauer group} of $X$ is defined as
$H^2(X,\GG_m)_{\text{tors}}$. Thanks to the following theorem we know
that the cohomological Brauer group often coincides with the usual
Brauer group.
\begin{theorem} (Gabber) Assume that $X$ is a quasi-compact, separated scheme
with an ample invertible sheaf (e.g. $X$ is quasi-projective).
Then 
\[
H^2(X,\GG_m)_{\text{tors}}=\Br(X)
\]
\end{theorem}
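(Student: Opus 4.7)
The plan is to prove the non-trivial inclusion $H^2(X,\GG_m)_{\text{tors}}\subseteq \Br(X)$; the reverse direction is standard, since an Azumaya algebra of degree $n$ defines a $\PGL_n$-torsor whose image under the coboundary of $1\to \GG_m\to \GL_n\to \PGL_n\to 1$ is an $n$-torsion class in $H^2$. Given a torsion class $\alpha\in H^2(X,\GG_m)$ of order $n$, I would exhibit an Azumaya algebra with Brauer class $\alpha$ by producing a locally free $\alpha$-twisted coherent sheaf $\mathcal{E}$ of positive constant rank on $X$; then $A := \uEnd(\mathcal{E})$ is automatically Azumaya and, by construction, has class $\alpha$ in $\Br(X)$.

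The natural framework is that of twisted sheaves. One interprets $\alpha$ as a $\GG_m$-gerbe $\pi\colon \mathcal{X}\to X$, and defines an $\alpha$-twisted coherent sheaf as a coherent sheaf on $\mathcal{X}$ on which the inertia $\GG_m$ acts by weight one. Locally, on an étale cover $\{U_i\to X\}$ trivializing $\alpha$, such a sheaf is just an ordinary coherent sheaf $\mathcal{F}_i$ on $U_i$, with gluing isomorphisms on overlaps satisfying the cocycle condition only up to a $2$-cocycle representing $\alpha$. From any such local data one easily produces a non-zero coherent $\alpha$-twisted sheaf; the substantive problem is to upgrade it to a locally free one of constant positive rank.

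This is where the hypothesis that $X$ carries an ample invertible sheaf $\mathcal{L}$ is used. The pulled-back sheaf $\pi^*\mathcal{L}$ remains ample in a sense appropriate to the category of $\alpha$-twisted coherent sheaves: for every such $\mathcal{F}$, the twist $\mathcal{F}\otimes\pi^*\mathcal{L}^N$ for $N\gg 0$ is generated by global twisted sections and has vanishing higher cohomology. Starting from a coherent $\alpha$-twisted sheaf $\mathcal{F}_0$ of positive generic rank, one then extracts a locally free $\alpha$-twisted sheaf by the standard kind of sections-and-modifications argument underlying classical constructions of vector bundles on schemes with an ample line bundle, modifying $\mathcal{F}_0$ successively on closed subschemes of decreasing dimension until it becomes locally free of constant rank.

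The main obstacle, and the technical heart of the proof, is exactly this last step. On a general quasi-compact separated scheme, not every coherent sheaf is a quotient of a locally free sheaf of finite rank, so the passage from coherent $\alpha$-twisted to locally free $\alpha$-twisted requires a careful use of $\mathcal{L}$ together with the torsion of $\alpha$ — for instance via a finite flat cover of $X$ on which $\alpha$ trivializes, which turns twisted sheaves into ordinary ones and allows descent of a vector bundle back down to $\mathcal{X}$. This is where Gabber's original argument and de Jong's simplified proof diverge in detail, and it is where I would expect to invest the bulk of the effort. Once such an $\mathcal{E}$ has been produced, verifying that $\uEnd(\mathcal{E})$ is Azumaya and that its class maps to $\alpha$ under the natural injection $\Br(X)\hookrightarrow H^2(X,\GG_m)$ is essentially formal.
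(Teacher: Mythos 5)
The paper does not actually prove this theorem. It records the statement, remarks that Gabber's proof ``is not widely disseminated,'' and refers the reader to de Jong's write-up \cite{dJ1} for a different proof. So there is no argument in the paper against which to compare your sketch step by step.

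Taken on its own merits, your outline does follow de Jong's general route (interpret $\alpha$ as a $\GG_m$-gerbe $\pi\colon\mathcal{X}\to X$, produce a locally free weight-one $\alpha$-twisted sheaf $\mathcal{E}$ of positive constant rank, and take $\uHom(\mathcal{E},\mathcal{E})$), and the easy inclusion $\Br(X)\subset H^2(X,\GG_m)_{\mathrm{tors}}$ is as you say. But there is a genuine error at the step where the ample line bundle is invoked. For a weight-one $\alpha$-twisted sheaf $\mathcal{F}$ on the gerbe one has $\pi_*\mathcal{F}=0$, since $\pi_*$ extracts the weight-zero isotypic component, and hence $H^0(\mathcal{X},\mathcal{F})=H^0(X,\pi_*\mathcal{F})=0$. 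Thus ``$\mathcal{F}\otimes\pi^*\mathcal{L}^N$ is generated by global twisted sections for $N\gg 0$'' is vacuous: an ample line bundle on $X$ cannot be used in the naive way to generate a nontrivially twisted sheaf by sections. Any correct use of $\mathcal{L}$ must pass through weight-zero objects, e.g.\ $\uHom(\mathcal{G},\mathcal{F})$ for a second twisted sheaf $\mathcal{G}$, and producing a starting $\mathcal{G}$ of everywhere positive rank is precisely the hard point of the theorem, not a preliminary. Your alternative suggestion of descending a bundle along a finite flat cover splitting $\alpha$ is circular as stated: the existence of such a cover for a general torsion class in $H^2(X,\GG_m)$ is essentially equivalent to the representability of $\alpha$ by an Azumaya algebra, which is the statement to be proved. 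So the sketch points in the right direction, correctly identifies where the difficulty lies, but one of the steps you treat as routine is in fact false, and the actual mechanism that makes the ample bundle useful (the core of both Gabber's and de Jong's arguments) is absent.
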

Gabber's proof of this result is not widely disseminated. However a different
proof was given by de Jong. See \cite{dJ1}. 

Let $\mu_n$ denote the $n$-th roots of unity. Since by our standing hypotheses $n$ is invertible
we have an exact sequence for the etale topology
\[
1\r \mu_n\r \GG_m\xrightarrow{(-)^n}\GG_m\r 1
\]
(this is the Kummer sequence). Taking cohomology we get a short exact sequence
\begin{equation}
\label{ref-4.1-1}
0\r \Pic(X)/n\Pic(X)\r H^2(X,\mu_n)\xrightarrow{[-]} \Br(X)_n\r 0
\end{equation}
We find that the cohomology group $H^2(X,\mu_n)$ is a finer invariant
than $\Br(x)_n$.

Now we look at 
\[
1\r \mu_n\r \Sl_n\r \PGl_n\r 1
\]
Taking into account that \cite[Proof of Thm IV.2.5]{Milne}
\[
H^1(X,\PGl_n)=\Az_n(X)
\]
where $\Az_n(X)$ denotes the isomorphism classes of Azumaya algebras of rank
$n^2$ we obtain a map
\[
\cl(-):\Az_n(X)\r H^2(X,\mu_n)
\]
Thus we have constructed maps
\[
\Az_n(X)\xrightarrow{\cl(-)} H^2(X,\mu_n)\xrightarrow{[-]} \Br(X)_n
\]
We often denote the composition by $[-]$ as well. 
\begin{lemma} \label{ref-4.2-2} The square in the following diagram is commutative up to sign
\[
\begin{CD}
\Az_n(X) @>\cl(-)>> H^2(X,\mu_n)  @>[-]>> \Br(X)_n\\
@A\HEnd_{\Oscr_X}(-)AA @AAA\\
\Vect_n(X) @>>\wedge^n(-)\mod n> \Pic(X)/n\Pic(X)
\end{CD}
\]
where $\Vect_n(X)$ denotes the isoclasses of vector bundles of rank $n$
on $X$. 
\end{lemma}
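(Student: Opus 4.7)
The plan is to compute both compositions in terms of explicit \v{C}ech cocycles on a common trivializing cover and observe that they differ only by an inverse.

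First I would fix a rank $n$ vector bundle $V$ on $X$ and choose an \'etale cover $\{U_i\}$ on which $V$ is trivialized by isomorphisms $\psi_i : V|_{U_i} \xrightarrow{\sim} \Oscr_{U_i}^n$. Set $g_{ij} := \psi_i \circ \psi_j^{-1} \in \GL_n(U_{ij})$; this is the \v{C}ech $1$-cocycle representing $V$. Pushing down the right vertical arrow, $\HEnd_{\Oscr_X}(V)$ corresponds to the class of $\bar g_{ij} \in \PGL_n(U_{ij})$, while pushing down and right, $\wedge^n V$ corresponds to $\det(g_{ij}) \in \GG_m(U_{ij})$, whose image in $\Pic(X)/n\Pic(X)$ is what we need to track.

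Next I would unwind the two connecting maps into $H^2(X,\mu_n)$. For $\cl(-)$, one uses the sequence $1 \to \mu_n \to \Sl_n \to \PGl_n \to 1$: after refining the cover, lift each $\bar g_{ij}$ to some $h_{ij} \in \Sl_n(U_{ij})$; then $\cl(\HEnd(V))$ is represented by the $2$-cocycle $h_{ij} h_{jk} h_{ki} \in \mu_n(U_{ijk})$. For the bottom route, one uses the Kummer sequence: choose an $n$-th root $t_{ij} \in \GG_m(U_{ij})$ of $\det(g_{ij})$ after possibly refining; then the image of $\wedge^n V \bmod n$ in $H^2(X,\mu_n)$ via \eqref{ref-4.1-1} is represented by $t_{ij} t_{jk} t_{ki} \in \mu_n(U_{ijk})$.

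The key observation is that the two lifts can be chosen compatibly. Set $h_{ij} := t_{ij}^{-1}\cdot g_{ij}$; since $\det(h_{ij}) = t_{ij}^{-n}\det(g_{ij}) = 1$, this is a lift of $\bar g_{ij}$ to $\Sl_n$. Because $t_{ij}$ is a scalar and hence central, and because $g_{ij}g_{jk}g_{ki}=1$ (the cocycle condition for $V$), one computes
\[
h_{ij} h_{jk} h_{ki} \;=\; (t_{ij} t_{jk} t_{ki})^{-1} \cdot g_{ij}g_{jk}g_{ki} \;=\; (t_{ij} t_{jk} t_{ki})^{-1}.
\]
Hence $\cl(\HEnd(V)) = -[\wedge^n V \bmod n]$ in $H^2(X,\mu_n)$ (additive notation), which is precisely commutativity up to sign. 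Composing with $[-]\colon H^2(X,\mu_n) \to \Br(X)_n$ kills the term coming from $\Pic/n\Pic$ by exactness of \eqref{ref-4.1-1}, as anticipated.

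The only real obstacle is bookkeeping: one must refine the cover enough for the $n$-th root $t_{ij}$ of $\det(g_{ij})$ to exist \'etale-locally, and one must justify that the cocycle identities for non-abelian $\PGl_n$ and $\Sl_n$ translate correctly into the abelian cocycle computation above. Both are standard, and centrality of $\mu_n \subset \Sl_n$ is what makes the final identity come out cleanly.
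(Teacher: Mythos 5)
Your proof is correct and is essentially a \v{C}ech-level unwinding of the paper's argument: the paper encodes the same compatibility by a diagram of central extensions with middle row $1\to\mu_n\to\SL_n\times\GG_m\to\GL_n\to 1$, and your compatible lift $h_{ij}=t_{ij}^{-1}g_{ij}$ is precisely the choice of preimage $(h_{ij},t_{ij})\in\SL_n\times\GG_m$ of $g_{ij}$ that this diagram organizes, with the sign emerging in the paper from the map $(-)^{-1}$ on $\mu_n$ and in your computation from the inverse $(t_{ij}t_{jk}t_{ki})^{-1}$. So this is the same underlying idea, made concrete at the cocycle level.
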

\begin{proof} This follows from the following commutative diagram of
groups
\[
\begin{CD}
1 @>>> \mu_n @>>> \SL_n @>>> \PGL_n @>>> 1\\
@. @A\Id AA @AA\pr_1 A @AAA\\
1 @>>> \mu_n @>x\mapsto(x,x^{-1})>> \SL_n\times \GG_m @>(u,v)\mapsto uv>> \GL_n @>>> 1\\
@. @V(-)^{-1}VV @VV\pr_2 V @VV\det V\\
1 @>>> \mu_n @>>> \GG_m @>>(-)^n> \GG_n @>>> 1\\
\end{CD}
\]
Taking into account that
\[
H^1(X,\Gl_n)=\Vect_n(X)
\]
and that the map
\[
H^1(X,\Gl_n)\r H^1(X,\PGl_n)
\]
corresponds to 
\[
E\mapsto \End_{\Oscr_X}(E)
\]
(see \cite[Proof of Thm IV.2.5]{Milne}) we obtain a commutative diagram
\[
\begin{CD}
\Az_n(X) @>\cl>> H^2(X,\mu_n)\\
@A\HEnd_{\Oscr_X}(-)AA @|\\
\Vect_n(X) @>>> H^2(X,\mu_n)\\
@V\wedge^n(-) VV @VV\times(-1)V\\
\Pic(X) @>>> H^2(X,\mu_n)
\end{CD}
\]
which yields what we want. 
\end{proof}
\section{Ramification}
\label{ref-5-3}
It is well know that etale cohomology can be computed in terms of
local invariants. We
discuss this in a very special case.

Let $l$ be a prime number different from the characteristic.\footnote{Most
of what we say is valid for $l$ not divisible by the characteristic.} Let
$\mu_l$ denote the $l$'th roots of unity. The following is the version by
Bloch and Ogus of the Grothendieck coniveau spectral sequence
\cite[p164]{dixexp}. 
\begin{theorem} \cite[Prop (3.9)]{BlochOgus}
Assume that $X$ is smooth over a perfect ground field $k$. Then there
is a spectral sequence 
\begin{equation}
\label{ref-5.1-4}
E^{pq}_1=\oplus_{x\in X^{(p)}} H^{q-p}(x,\mu_l^{\otimes n-p})
\Rightarrow 
H^{p+q}(X,\mu_l^{\otimes n})
\end{equation}
Here $X^{(p)}$ are the points of codimension $p$ is $X$. I.e.\ the points
such that $\Oscr_{X,x}$ has (Krull) dimension $p$.
\end{theorem}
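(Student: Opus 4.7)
The plan is to build a coniveau filtration on $H^\ast(X,\mu_l^{\otimes n})$ by codimension of support, extract the desired spectral sequence from the associated exact couple, and then identify its $E_1$-term via cohomological purity.

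Concretely, I order the set of closed subsets of $X$ of codimension $\geq p$ by inclusion and, for each nested pair $Z' \subset Z$ with $\operatorname{codim} Z \geq p$ and $\operatorname{codim} Z' \geq p+1$, I use the long exact sequence of local cohomology
\[
\cdots \to H^i_{Z'}(X,\mu_l^{\otimes n}) \to H^i_Z(X,\mu_l^{\otimes n}) \to H^i_{Z\setminus Z'}(X\setminus Z',\mu_l^{\otimes n}) \to \cdots.
\]
Passing to the filtered colimit over such pairs yields an exact couple whose associated spectral sequence has
\[
E_1^{pq} = \varinjlim_{Z' \subset Z} H^{p+q}_{Z\setminus Z'}(X\setminus Z',\mu_l^{\otimes n})
\]
and converges to $H^{p+q}(X,\mu_l^{\otimes n})$; convergence is elementary because the filtration is bounded by $\dim X$.

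Next I would rewrite this $E_1$-term as a direct sum indexed by codimension-$p$ points. For $Z,Z'$ as above, $Z\setminus Z'$ is a locally closed subset of pure codimension $p$, and a Mayer--Vietoris argument on its irreducible components, combined with passage to the colimit over neighborhoods, collapses the expression to
\[
E_1^{pq} = \bigoplus_{x \in X^{(p)}} \varinjlim_{U \ni x} H^{p+q}_{\overline{\{x\}} \cap U}(U,\mu_l^{\otimes n}),
\]
where the inner limit runs over sufficiently small open neighborhoods of $x$ in $X$. The final step is to invoke absolute cohomological purity: for a codimension-$p$ point $x$ of the smooth scheme $X$ one has
\[
\varinjlim_{U \ni x} H^{p+q}_{\overline{\{x\}} \cap U}(U,\mu_l^{\otimes n}) \cong H^{q-p}(\kappa(x),\mu_l^{\otimes n-p}),
\]
the shifts $-p$ in cohomological degree and Tate twist reflecting the codimension of the closed immersion. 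Substituting this yields the claimed formula.

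The main obstacle by far is purity. The construction of the exact couple and the Mayer--Vietoris bookkeeping leading to a sum over points are routine homological algebra, but the purity isomorphism $Ri^!\mu_l^{\otimes n} \cong \mu_l^{\otimes n-p}[-2p]$ for a smooth closed pair of codimension $p$ is deep (proved in full generality by Gabber; in our smooth-over-a-perfect-field setting the classical Artin--Grothendieck purity for smooth pairs suffices). One mild subtlety is that the closure $\overline{\{x\}}$ need not be regular at $x$, but this is harmless because the purity statement is applied over the generic point of $\overline{\{x\}}$, where the immersion into $X$ is automatically a regular embedding of smooth $k$-schemes after shrinking $U$. Granting purity, all remaining steps are formal.
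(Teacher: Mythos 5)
The paper does not prove this result; it simply cites Bloch--Ogus, and the construction there (going back to Grothendieck's \emph{Le groupe de Brauer III}) is exactly the one you sketch: the coniveau filtration by codimension of support, the exact couple from the local-cohomology long exact sequences, and the identification of the $E_1$-page via cohomological purity for the smooth pairs obtained by shrinking to the smooth locus of $\overline{\{x\}}$. Your account is correct in substance and matches the cited reference; two small points of phrasing could be tightened, namely that the collapse of $E_1$ into a sum over points is really an excision/additivity argument (after enlarging $Z'$ so the codimension-$p$ components of $Z$ become disjoint and the higher-codimension ones are absorbed) rather than a Mayer--Vietoris argument, and that $Z\setminus Z'$ need not be of pure codimension $p$ before passing to the colimit, but these do not affect the correctness.
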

 Furthermore $H^p(x,
\mu_l^{\otimes n})$
is defined as 
\[
H^p(x,\mu^{\otimes n}_l)=\dirlim_{U\subset \overline{\{x\}}} H^p(U,\mu^{\otimes n}_l )
\]
where $U$ runs through the open subsets of $\overline{\{x\}}$. Note
that
\[
\dirlim_{U\subset \overline{\{x\}}} H^p(U,\mu_l^{\otimes n} )=
H^p(\Spec k(x),\mu_l^{\otimes n})
\]
To prove this we may assume that $x$ is the generic point of $X$ and that
$X$ is affine. We have $\Spec k(x)=\invlim U$ where $U$ runs through the open
affines of $X$. We can now invoke \cite[Thm VII.5.7]{SGA4} which says
that under these circumstances etale cohomology commutes with inverse limits.

The main point of \cite{BlochOgus} is that the sheaffification for the Zariski
topology of \eqref{ref-5.1-4} degenerates at $E_2$ and computes the sheaffification
of the presheaf $U\mapsto H^p(U,\mu^{\otimes n})$. This will not concern us 
however.

If $K$ is a field transcendence degree $r$ over the algebraically
closed field $k$ then the $l$-cohomological dimension of $K$ is $r$.
See \cite[p223 top]{Milne}. The $l$-cohomological dimension is the degree
above which all Galois cohomology of $l$-torsion sheaves vanishes. This means
that $E^{pq}_1=0$ for $q>\dim X$.

We will now write out explicitly the coniveau spectral sequence in the case
that $X$ is a smooth proper connected surface over an algebraically closed
field $k$.  
\[
\begin{matrix}
  \bigoplus_{x\in X^{(0)}} H^2(x,\mu_l^{\otimes n}) &\bigoplus_{x\in
    X^{(1)}} H^1(x,\mu^{\otimes n-1}_l) & \bigoplus_{x\in X^{(2)}}
  H^0(x,\mu^{\otimes n-2}_l)
  \\*[0.2in]
  \bigoplus_{x\in X^{(0)}} H^1(x,\mu_l^{\otimes n}) & 
\bigoplus_{x\in X^{(1)}} H^0(x,\mu_l^{\otimes n-1})&0\\*[0.2in]
  \bigoplus_{x\in X^{(0)}} H^0(x,\mu_l^{\otimes n}) & 0&0
\end{matrix}
\]
So if  $C$ runs through the irreducible 
curves in $X$ and $x$ runs through the closed points of $x$ we get a complex
\begin{equation}
\label{ref-5.2-5}
H^2(X,\mu^{\otimes n}_l) \r  H^2(k(X),\mu^{\otimes n}_l)
\r \bigoplus_{C} H^1(k(C),\mu^{\otimes n-1}_l)\r \\
\bigoplus_{x} \mu^{\otimes n-2}_l
\r \mu^{\otimes n-2}_l\r 0
\end{equation}
where we have used Poincare duality
\[
H^4(X,\mu^{\otimes n}_l)\cong \mu^{\otimes n-2}_l
\]
This complex is acyclic in positions 2,4,5 and has homology in
$H^3(X,\mu^{\otimes n}_l)$ in position 3.

Since $X$ is a smooth surface we have that $\Br(X)_l\r\Br(k(X))_l$ is 
injective \cite[Cor.\ IV.2.6]{Milne}. From \eqref{ref-4.1-1} applied
to $\Spec k(X)$ we get 
\[
H^2(k(X),\mu_l)= \Br(k(X))_l
\]
Whence \eqref{ref-5.2-5} transforms into a complex 
\begin{equation}
\label{ref-5.3-6}
0\r \Br(X)_l \r  \Br(k(X))_l
\xrightarrow{a} \bigoplus_{C} H^1(k(C),\ZZ/l\ZZ)\xrightarrow{r} \\
\bigoplus_{x} \mu^{-1}_l
\xrightarrow{s} \mu^{-1}_l\r 0
\end{equation}
which is now also exact on the left. 

\medskip

In \cite{AM} an explicit description of the maps in this complex is
given.\footnote{The proof in \cite{AM} for the existence of this sequence is 
a bit different from, but equivalent to the one given here.}
The map $a$ is $\oplus_{C} \Ram_C$ where 
\[
\Ram_C: \Br(k(X))_l\r H^1(k(C),\ZZ/l\ZZ)
\]
is the ``standard ramification map'' associated to the discrete
valuation ring $R\subset K\overset{\text{def}}{=}$ corresponding to
$C\subset X$.  Note that we have
\[
H^1(k(C),\ZZ/l\ZZ)=\Hom(\Gal(k(C)),\ZZ/l\ZZ)
\]
from which one deduces the well-known basic fact that the non-trivial
elements of $H^1(k(C),\ZZ/l\ZZ)$ are represented by couples
$(L/k(C),\sigma)$ where $L/k(C)$ is a cyclic extension of degree $l$
and $\sigma$ is a generator of $\Gal(L/k(C))$ ($\sigma$ is the inverse
image of $\bar{1}\in \ZZ/l\ZZ$). Hence if $\ram_C$ takes a non-trivial
value on $[A]\in \Br(k(X))_l$ (i.e.\ $A$ is ``ramified'' in $C$) then $\ram_C([A])$
defines a couple $(L/k(C),\sigma)$ as above.

\begin{remark} 
  By a local computation one may show that the following ring
  theoretic description of the ramification map is correct \cite{AdJ}. Let $D\in
  \Br(k(X))_l$ be a division algebra ramified in $C$. Let $\Lambda\subset D$ be a
  maximal order over $R$. Let $M$ be the twosided maximal ideal of
  $\Lambda$.  Then $\Lambda/M$ be is a central simple algebra. Let $L$
  be its center. 
  One may show that $M$ is generated by a normalizing element $\Pi$.
  Then $\sigma=\Pi\cdot \Pi^{-1}$ defines an automorphism of $L$ over
  $k(C)$ and $\ram_C([D])=(L/k(C),\sigma)$. See \cite{Reiner} for some of
the unexplained terminology in this remark. 
\end{remark}

\medskip

The map $r$ is a direct sum $\oplus_{C,x} r_{C,x}$ where $x\in C$. Assume
that $C$ is smooth and $(L/k(C),\sigma)\in H^1(k(C),\ZZ/l\ZZ)$. Then 
$r_{C,x}(L/k(C),\sigma)$ measures the ramification of $L/k(C)$ in $x$ in the
sense that $r_{C,x}(L/k(C),\sigma)=0$ if and only if $L/k(C)$ is unramified
in $x$ (in the classical sense of extensions of discrete valuation rings). 
\begin{remark}
Using a local computation we can show that the following is correct. 
Let $K=k(C)$ and let $K_s$ be the separable closure of $K$. Then
we have an exact sequence
\[
0\r \mu_l\r K^\ast_s\xrightarrow{l} K_s^\ast\r 0
\]
from which we deduce using Hilbert's theorem 90 (recall that over a field
etale cohomology is the same as Galois cohomology). 
\[
H^1(K,\mu_l)=K^\ast/(K^\ast)^l
\]
If $v$ is a discrete valuation (associated to a point $x\in C$) then
we obtain an induced map
\[
r:H^1(K,\mu_l)\r\ZZ/l\ZZ:\bar{s}\mapsto \overline{v(s)}
\]
Since $\mu_l\cong \ZZ/l\ZZ$ (non-canonically) we may twist $r$ to obtain
a (canonical) map
\[
r':H^1(K,\ZZ/l\ZZ)\r\mu_l
\]
Then one has $r'=r_{C,x}$. 
\end{remark}
\medskip

The map $s$ is simply the sum map. Note that for arbitrary $C$ we have a complex
\[
H^1(k(C),\ZZ/l\ZZ)\r \bigoplus_{x\in C} \mu^{-1}_l
\xrightarrow{s} \mu^{-1}_l
\]
In other words is $\xi\in H^1(k(C),\ZZ/l\ZZ)$ then
\[
\sum_{x\in C} r_{C,x}(\xi)=0
\]
This is a kind of reciprocity law. 

\medskip

The only reason why we need the coniveau complex is the following
\begin{proposition} \label{ref-5.4-7} Assume that $0\neq [A]\in \Br(X)_l$ where $X$ is a smooth connected
surface over an algebraically closed field $k$. Then it is impossible for $A$ to
be
ramified in a union of trees of $\PP^1$'s. 
\end{proposition}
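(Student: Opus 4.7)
My plan is to argue by contradiction using the complex \eqref{ref-5.3-6}. Suppose $0 \neq [A] \in \Br(X)_l$ is ramified only on a union $C = C_1 \cup \cdots \cup C_n$ where each $C_i \cong \PP^1$ and the dual (intersection) graph of $C$ is a forest. Set $\xi_i = \Ram_{C_i}([A]) \in H^1(k(C_i),\ZZ/l\ZZ)$; by assumption each $\xi_i$ is nonzero.

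The key input from \eqref{ref-5.3-6} is that it is a complex, hence $r(a([A])) = 0$. Unpacking, at every closed point $x \in X$ one has
\[
\sum_{i:\, x \in C_i} r_{C_i, x}(\xi_i) \;=\; 0,
\]
where only components of the ramification locus need to be summed (for the others $\xi_j = 0$). Since a nonempty forest always has a leaf, I choose $C_i \cong \PP^1$ meeting the rest of $C$ in at most one point $p$. For $x \in C_i$ with $x \neq p$, the point $x$ lies on no other ramification curve, so the displayed identity collapses to $r_{C_i,x}(\xi_i) = 0$. Hence $\xi_i$ is unramified at every point of $C_i$ other than possibly $p$.

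Now I apply the reciprocity law on $C_i = \PP^1$ itself, namely the further complex property $\sum_{x \in C_i} r_{C_i,x}(\xi_i) = 0$ coming from the last row of \eqref{ref-5.3-6}. Combined with the previous step, this forces $r_{C_i,p}(\xi_i) = 0$ as well. Therefore $\xi_i$ is unramified at every closed point of $\PP^1_k$, so it extends to a class in $H^1(\PP^1_k, \ZZ/l\ZZ)$. But $\PP^1_k$ is simply connected for coefficients of order prime to $\charact k$, so this group vanishes; hence $\xi_i = 0$, contradicting the fact that $A$ ramifies along $C_i$.

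The only nontrivial combinatorial point is the extraction of a leaf in the forest, which isolates the intersection structure enough that the global reciprocity on $\PP^1$ can be parlayed into unramifiedness at the single remaining point $p$. The geometric content is the simple connectedness of $\PP^1_k$: the same argument would fail for any curve of positive genus, reflecting that Azumaya algebras on surfaces genuinely can be ramified along, for example, elliptic curves.
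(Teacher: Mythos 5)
Your proof is correct and follows essentially the same route as the paper: pick a leaf $\PP^1$ of the forest, use the complex property of \eqref{ref-5.3-6} to kill the residues of $\Ram_{C}([A])$ away from the one possible intersection point, invoke the reciprocity law on that $\PP^1$ to kill the residue there too, and conclude from the simple connectedness of $\PP^1$ that $\Ram_{C}([A])=0$, a contradiction. The only difference is that you spell out the leaf-extraction and the geometric moral a bit more explicitly; the mathematical content is identical.
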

\begin{proof}
  Assume that the ramification locus of $A$ is a union of trees of
  $\PP^1$'s. Then there is at least one of those $\PP^1$'s which
  intersects the other components of the ramification locus in at most
  one point.  Denote this $\PP^1$ by $C$. The case that there is no
  intersection point is very slightly easier so assume there is an
  intersection point and denote it by $x$. Finally put
  $\xi=\Ram_C(A)$.  If $x\neq y\in C$ then there is no other component
  of the ramification locus which intersects $y$ and hence it follows
  from \eqref{ref-5.3-6} that $r_{C,y}(\xi)=0$.

From the identity $\sum_{x\in C}r_{C,x}(\xi)=0$ we then obtain $r_{C,x}(\xi)=0$.
But this means that $\xi$ is unramified over $\PP^1$. To finish the proof
we note that $\PP^1$ does not have any unramified coverings. See e.g.\ \cite[I.5(f)]{Milne}.
\end{proof}
A particular type of surface singularity is a so-called rational
double point.  The minimal resolutions of such singularities are trees
of $\PP^1$'s whose incidence graph is a Dynkin diagram. See
\cite{Lipman2}. We obtain the following corollary to Proposition
\ref{ref-5.4-7}. 
\begin{corollary} 
\label{ref-5.5-8}
Assume that $X$ is a proper, connected singular surface
whose only singularities are rational double points. Let $U$ be
the complement of the singular locus and let $\tilde{X}\r X$ be the
minimal resolution of singularities. If $\alpha\in \Br(k(X))_l$ is unramified
on $U$ then $\alpha$ is unramified on $\tilde{X}$ as well. 
\end{corollary}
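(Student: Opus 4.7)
The plan is to reduce the corollary directly to Proposition \ref{ref-5.4-7} by a short divisor-theoretic argument on the resolution. First I would note that the minimal resolution $\pi \colon \tilde X \to X$ is birational and projective, so $\tilde X$ is smooth, proper, and connected, with $k(\tilde X) = k(X)$. By the classification of rational double points, the exceptional fiber over each singular point is a tree of $\PP^1$'s (with transverse intersections and Dynkin incidence graph), so the total exceptional locus $E := \tilde X \setminus U$ is a disjoint union of such trees.

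Next I would analyze where $\alpha$ can ramify on $\tilde X$. For any irreducible curve $C \subset \tilde X$ with $C \not\subset E$, the generic point of $C$ lies in $U$, so the discrete valuation ring of $C$ on $\tilde X$ agrees with the discrete valuation ring of its image in $U$; since $\alpha$ is unramified on $U$, this forces $\Ram_C(\alpha) = 0$. Hence the ramification locus of $\alpha$ on $\tilde X$ is supported on a subcollection of the irreducible components of $E$. Any such subcollection, with its induced incidences, is a subforest of a forest of $\PP^1$'s, hence itself a disjoint union of trees of $\PP^1$'s.

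Now I would argue by contradiction. Suppose $\alpha$ were ramified on at least one component of $E$. Then $\alpha \neq 0$ in $\Br(k(\tilde X))_l$ and its ramification locus on the smooth proper connected surface $\tilde X$ would be a nonempty union of trees of $\PP^1$'s, contradicting Proposition \ref{ref-5.4-7}. Hence $\Ram_C(\alpha) = 0$ for every component $C$ of $E$ as well, so $\alpha$ is unramified on all of $\tilde X$.

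The main obstacle, such as it is, is purely bookkeeping: one must check that passing to the subset of components on which $\alpha$ is actually ramified still leaves a configuration meeting the hypotheses of Proposition \ref{ref-5.4-7}, i.e.\ a forest of $\PP^1$'s in which some $\PP^1$ is a leaf. Once this is observed no new local computation is needed beyond the reciprocity law and the coniveau complex \eqref{ref-5.3-6} already invoked in that proposition.
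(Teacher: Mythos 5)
Your proof is correct and follows essentially the same route as the paper: observe that ramification away from the exceptional locus is ruled out by the hypothesis on $U$, identify the exceptional locus of the minimal resolution of rational double points as a forest of $\PP^1$'s, and then invoke Proposition~\ref{ref-5.4-7}. The paper compresses the bookkeeping (that a subcollection of components of a forest is again a forest, and that non-exceptional curves have the same local ring as their images in $U$) into the single phrase ``it can be at most be ramified on the exceptional locus,'' but the underlying argument is identical to yours.
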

\begin{proof} If $\alpha$ is ramified on $\tilde{X}$ then it can be at
  most be ramified on the exceptional locus which is a union of trees
  of $\PP^1$'s. According to Proposition \ref{ref-5.4-7} it must then be
  unramified.
\end{proof}
The simplest rational double points are $A_{l-1}$ singularities. They
can be characterized as those singularities whose local ring $(R,m)$ after
completion becomes isomorphic to 
\[
k[[x,y,z]]/(z^l-xy)
\]
($l$ does not have to be prime the the characteristic here). An easy local
computation shows that such an $A_{l-1}$ singularity can be desingularized
by $\lceil (l-1)/2\rceil$ consecutive blowups at singular points (which
are themselves $A_{l'-1}$ singularities for $l'\le l$). The resulting
exceptional locus is the Dynkin diagram $A_{l-1}$.
\begin{example} \label{ref-5.6-9}
Let $(R,m)$ be a regular local $k$-algebra of dimension two with
  residue field $k$ and  let
$f\in m^2$ be such that $f-xy\cong 0 \mod m^3$ where $m=(x,y)$. 
Then $R=R[w]/(w^l-f)$ is an $A_{l-1}$-singularity.
To see this note that after completion we have $R=k[[x,y]]$. We claim
that after redefining $x$, $y$ we may assume $f=xy$. Assume
\[
f-xy\in m^t
\]
Put $x'=x+p$, $y'=y+q$ such that $p,q\in m^t$. Then
\[
f-x'y'\cong(f-xy)-xq-yp\mod m^{t+1}
\]
Since $x,y$ generate $m$ we may find $p,q$ such that $f-x'y'\in m^{t+1}$.
Repeating this procedure proves the claim.
\end{example}
Assume that $X$ is a smooth proper connected surface and let $\Lscr\in \Pic(X)$. Let $0\neq s\in 
H^0(X,\Lscr^l)$. We define a sheaf of algebras on $X$ as follows
\[
\Ascr=\bigoplus_{i=0}^{l-1}\Lscr^{-i}T^i
\]
where $T^l=s$ and $Y=\underline{\Spec}\Ascr$. Then $Y\r X$ is a cyclic 
covering of degree $l$ of $X$ which is ramified in the zeroes of $s$. 
\begin{proposition}
\label{ref-5.7-10}
  Assume that the zero divisor $H$ of $s$ has normal crossings with smooth components. Let
  $\tilde{Y}$ be the minimal resolution of singularities of $Y$. Let
  $\alpha\in \Br(k(X))_l$ and assume that the ramification locus of
  $\alpha$ is contained in $H$. Then $\alpha_{k(\tilde{Y})}$ is
  unramified.
\end{proposition}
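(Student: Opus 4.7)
The strategy is to apply Corollary~\ref{ref-5.5-8} to the cyclic cover $\pi : Y \to X$, which reduces the proposition to two sub-tasks: (a) show that the singularities of $Y$ are rational double points; (b) show that $\alpha_{k(Y)}$ is unramified on the smooth locus $U := Y^{\mathrm{sm}}$. Since $\pi$ is finite and $X$ is proper, $Y$ is proper, and Corollary~\ref{ref-5.5-8} may then be invoked on each connected component of $Y$; note that $k(\tilde Y)=k(Y)$ since $\tilde Y\to Y$ is birational.

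For (a), I would analyze $\pi$ locally. Away from the zero locus $H$ of $s$ the map is étale, hence $Y$ is smooth there. At a smooth point $p$ of $H$ lying on a single component $C$, pick local coordinates $(x,y)$ on $X$ at $p$ so that $C$ is cut out by $x$; then $s = ux$ for some unit $u$, and $Y$ is locally $\Spec \Oscr_{X,p}[T]/(T^l - ux)$, which is regular with $T,y$ as local parameters. At a crossing of two smooth components of $H$ we have $s = uxy$ locally, and Example~\ref{ref-5.6-9} shows that $Y$ acquires an $A_{l-1}$ singularity above $p$. Thus the singularities of $Y$ are the finitely many $A_{l-1}$ points above the crossings of $H$, which are rational double points.

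For (b), I would check the ramification at each codimension one point of $U$. Codimension one points of $U$ not lying over $H$ map étale to $X$, and $\alpha$ is unramified at their images by hypothesis, so their pullbacks are unramified. A codimension one point $C'$ of $U$ lying above a component $C$ of $H$ corresponds, by the local description in (a), to a totally ramified extension of discrete valuation rings $\Oscr_{X,C}\hookrightarrow \Oscr_{Y,C'}$ of index $e=l$. The compatibility of the ramification map with field extensions then yields
\[
\Ram_{C'}(\alpha_{k(Y)}) \;=\; e\cdot \mathrm{res}_{k(C')/k(C)}\bigl(\Ram_C(\alpha)\bigr) \;=\; l\cdot \mathrm{res}_{k(C')/k(C)}\bigl(\Ram_C(\alpha)\bigr),
\]
which vanishes because $\Ram_C(\alpha)\in H^1(k(C),\ZZ/l\ZZ)$ is $l$-torsion. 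Hence $\alpha_{k(Y)}$ is unramified on $U$, and combining (a) and (b) with Corollary~\ref{ref-5.5-8} gives the conclusion. The main obstacle is justifying the factor of $e$ in the displayed identity; rather than quoting it as a black box, I would verify it either through the ring-theoretic description of $\Ram_C$ in the remark after Proposition~\ref{ref-5.4-7} (the normalizing parameter $\Pi$ of a maximal order over $\Oscr_{X,C}$ becomes an $l$-th power after base change to $\Oscr_{Y,C'}$, trivializing the ramification directly), or via the Kummer identification $H^1(k(C),\mu_l)\cong k(C)^\ast/(k(C)^\ast)^l$, where multiplication by $e$ is transparent on uniformizers.
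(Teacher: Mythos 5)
Your proposal is correct and takes the same route as the paper: establish that $Y$ has only $A_{l-1}$ singularities (the paper cites Example~\ref{ref-5.6-9}; you additionally verify regularity away from the crossings), establish that $\alpha_{k(Y)}$ is unramified on the regular locus (the paper calls this ``standard'' and cites Colliot-Th\'el\`ene; you supply the argument via the formula $\Ram_{C'}(\alpha_{k(Y)}) = e\cdot\mathrm{res}(\Ram_C(\alpha))$ with $e=l$ over components of $H$), and then invoke Corollary~\ref{ref-5.5-8}. You merely expand what the paper compresses into citations.
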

\begin{proof}  
By Example \ref{ref-5.6-9} $Y$ has $A_{l-1}$-rational singularities.
As the ramification locus of $\alpha$ is contained in
$H$ it is standard that $\alpha_{k(Y)}$ is unramified on the regular
part of $Y$ (e.g.\ \cite[\S23.3]{CT1}). But then it follows from Corollary 
\ref{ref-5.5-8} that 
$\alpha_{k(\tilde{Y})}$ is unramified as well.
\end{proof}
Before moving one we state a result which is peculiar to
dimension two. 
\begin{lemma} \label{ref-5.8-11} Let $X$ be a smooth projective surface and let $A$ be
a central simple algebra over $k(X)$ which is unramified. Then there
exists an Azumaya algebra $\Ascr$ on $X$ such that $\Ascr_{k(X)}=A$.
\end{lemma}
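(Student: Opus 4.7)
The plan is to take $\Ascr$ to be a maximal $\Oscr_X$-order in $A$ and to show that the unramifiedness hypothesis forces this order to be Azumaya. First, by a standard spreading-out argument I would produce a coherent $\Oscr_X$-order $\Lambda_0\subset A$ with $\Lambda_0\otimes_{\Oscr_X}k(X)=A$: pick a $k(X)$-basis $e_1,\dots,e_{n^2}$ of $A$ (where $n=\deg A$), shrink to an affine open $U=\Spec R\subset X$ on which all structure constants lie in $R$, set $\Lambda_U=\bigoplus_i Re_i$, and then extend across $X\setminus U$ using Grothendieck's coherent extension theorem (closing under multiplication if necessary). Then enlarge $\Lambda_0$ to a maximal $\Oscr_X$-order $\Lambda\supseteq\Lambda_0$ in $A$. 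Maximal orders are reflexive as $\Oscr_X$-modules, and since $X$ is a regular surface the Auslander--Buchsbaum formula makes reflexive coherent sheaves locally free (a finitely generated reflexive module over a $2$-dimensional regular local ring has depth $\geq 2$, hence projective dimension $0$). Thus $\Lambda$ is locally free of rank $n^2$ with generic fiber $A$.

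It then remains to verify that $\Lambda$ is Azumaya. At each codimension-one point $x\in X^{(1)}$, $\Lambda_x$ is a maximal order over the DVR $\Oscr_{X,x}$ inside the CSA $A$; the ring-theoretic description of $\Ram_C$ via the two-sided maximal ideal of a maximal order recalled in the remark after~\eqref{ref-5.3-6} shows that $\Ram_C([A])=0$ is equivalent to $\Lambda_x$ being Azumaya. Hence $\Lambda$ is Azumaya on some open $V\subset X$ containing all codimension-one points, so whose complement is a finite set of closed points. At such a closed point $x$, set $R=\Oscr_{X,x}$, a regular local ring of dimension~$2$, and consider the trace map
\[
\tau:\Lambda_x\otimes_R\Lambda_x^{\mathrm{op}}\longrightarrow\End_R(\Lambda_x),
\]
a morphism of free $R$-modules of rank $n^4$ which is an isomorphism at every height-one prime of $R$. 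Its cokernel is therefore supported at the maximal ideal; but the short exact sequence
\[
0\to\Lambda_x\otimes_R\Lambda_x^{\mathrm{op}}\to\End_R(\Lambda_x)\to\coker(\tau)\to 0
\]
exhibits $\coker(\tau)$ as having projective dimension $\leq 1$, and therefore depth $\geq \dim R-1=1$ by Auslander--Buchsbaum. This is incompatible with support at the maximal ideal unless $\coker(\tau)=0$, whence $\tau$ is an isomorphism and $\Lambda_x$ is Azumaya.

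The main obstacle is this codimension-two extension step: it is genuinely surface-specific, using $\dim X=2$ and regularity in an essential way through the depth/Auslander--Buchsbaum argument (on a threefold the same strategy would break down). Once one identifies ramification with the failure of a maximal order to be Azumaya over a DVR, the codimension-one part is comparatively routine, and the construction of a coherent order in the first place is purely formal.
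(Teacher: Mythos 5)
Your proposal follows the paper's proof essentially step for step: take a maximal $\Oscr_X$-order $\Lambda$, use reflexivity of maximal orders together with regularity of the two-dimensional local rings to get local freeness, observe that the unramified hypothesis makes $\Lambda$ Azumaya at all codimension-one points, and conclude by showing the canonical map $\Lambda\otimes\Lambda^{\mathrm{op}}\to\uEnd(\Lambda)$ is an isomorphism. The only cosmetic difference is the last step, where the paper simply notes that a map between locally free sheaves of the same rank that is an isomorphism in codimension one must be an isomorphism (the determinant is a nowhere-vanishing section of a line bundle), while you re-derive this via projective dimension of the cokernel and Auslander--Buchsbaum; both reasonings are correct and interchangeable.
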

\begin{proof} Let $\Ascr$ be a maximal order in $A$. I.e. a sheaf of
  $\Oscr_X$-algebras torsion free and coherent as $\Oscr_X$-module
  which is contained in  $A$ (such an object is called an ``order'') and
  which is not properly contained in any other order. Maximal orders are
  a non-commutative version of integrally closed rings but they are not
  unique \cite{Reiner}.

As an $\Oscr_X$ module we have 
\[
\Ascr=\Ascr^{\vee\vee}=\uHom_X(\uHom_X(\Ascr,\Oscr_X),\Oscr_X)
\]
since otherwise $\Ascr^{\vee\vee}$ would be a bigger order. A standard
fact in commutative algebra says that a reflexive module over a ring
of global dimension two is projective. So $\Ascr$ is locally free.
Furthermore since $A$ was supposed to be unramified we have that
$\Ascr_x$ is Azumaya for all $x\in X^{(1)}$ where $X^{(1)}$ denotes the
points of codimension one. 

Consider now the standard map
\[
\Ascr\otimes_{\Oscr_X} \Ascr^\circ \r \uEnd_{\Oscr_X}(\Ascr)
\]
This now an isomorphism in codimension one between locally free modules
of the same rank. Hence it has to be an isomorphism. 
\end{proof}
\section{Artin splitting}
If $\Ascr$ is an Azumaya algebra of index $d$ on a scheme $X$, $\Lscr$
is an invertible sheaf on $X$ and $s\in H^0(X,\Ascr\otimes_X \Lscr)$ then we
denote the (reduced) characteristic polynomial of $s$ by
$P_s(T)$. The coefficient of $T^i$ is a section of
$\Lscr^{\otimes d-i}$.

Put
\[
L=\underline{\Spec} \oplus_n \Lscr^{-n}T^n
\]
We may view $P_s(t)\Lscr^{-d}$ as sections of $\oplus_n
\Lscr^{-n}T^n$. Hence inside $L$ they cut out a subscheme which we
denote by $Y_s$. Looking e.g.\ locally it is easy to see that $Y_s/X$
finite and flat. 

 If we do not specify $\Lscr$ (e.g.\ in the local
case) then we assume $\Lscr=\Oscr_X$ (or rather that we have chosen
some unspecified trivialization $\Lscr\cong \Oscr_X)$. 

The following result is due to  Mike Artin \cite{AdJ}
\begin{theorem} 
\label{ref-6.1-12}
Assume that $X$ is a smooth projective surface over an algebraically
closed field $k$. Let $\Ascr$ be an Azumaya algebra of degree $d$ over
$X$ which is generically a division algebra.  Let $\Mscr$ be an ample
line bundle on $X$. Then for $r\gg 0$ and $\Lscr=\Mscr^r$ there exists
a Zariski open subset $U\subset H^0(X,\Ascr\otimes_{\Oscr_X}\Lscr)$
such that for $s\in U$ we have
\begin{enumerate}
\item $Y_s$ is a smooth connected surface.
\item $Y_s/X$ is generically etale (i.e. $k(Y_s)/k(X)$ is separable). 
\item $Y_s$ splits $\Ascr$.
\end{enumerate}
\end{theorem}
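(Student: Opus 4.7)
The plan is to dispose of (2) and (3) first by studying the generic stalk of $\Ascr$, and then prove the smoothness and connectedness claimed in (1) by a Bertini-type argument whose key input is ampleness of $\Ascr\otimes\Lscr$ for $r\gg 0$.

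For (2) and (3): the generic stalk $A=\Ascr_{k(X)}$ is a central division algebra of degree $d$ over $k(X)$, and for a Zariski-generic element $a\in A$ the reduced characteristic polynomial $P_a(T)$ is separable and irreducible of degree $d$. Hence $k(X)[a]\cong k(X)[T]/(P_a(T))$ is a maximal etale subfield of $A$, and therefore splits $A$. Taking $r$ large enough that $\Ascr\otimes\Lscr$ is globally generated, the evaluation map $H^0(X,\Ascr\otimes\Lscr)\to A$ (after trivializing $\Lscr$ at the generic point) has dense image, so for $s$ in a non-empty Zariski open of $H^0(X,\Ascr\otimes\Lscr)$ the polynomial $P_s$ is separable and irreducible over $k(X)$. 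This yields (2), (3), and the irreducibility (hence connectedness) of $Y_s$.

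For the smoothness half of (1): $Y_s\to X$ is finite and flat of degree $d$, hence $Y_s$ is smooth over $k$ iff it is a smooth divisor in the line-bundle total space $L$, i.e.\ iff $dP_s$ does not vanish along $Y_s$. I would form the incidence variety
\[
Z=\{(s,y)\in H^0(X,\Ascr\otimes\Lscr)\times L\mid P_s(y)=0\text{ and }dP_s(y)=0\}
\]
and bound its dimension by projecting onto $L$. At a fixed $y\in L$ above $x\in X$, the conditions $P_s(y)=0$ and $dP_s(y)=0$ form a system of $1+\dim L=4$ equations on the $1$-jet of $s$ at $x$. For $r\gg 0$, ampleness of $\Ascr\otimes\Mscr^r$ makes the $1$-jet map $H^0(X,\Ascr\otimes\Lscr)\to J^1_x(\Ascr\otimes\Lscr)$ surjective at every $x\in X$, and the four equations are linearly independent functionals on the $1$-jet. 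Thus each fiber of $\pi_2:Z\to L$ has codimension at least $4$ in $H^0(X,\Ascr\otimes\Lscr)$, so $\dim Z\le \dim H^0(X,\Ascr\otimes\Lscr)+\dim L-4<\dim H^0(X,\Ascr\otimes\Lscr)$. Consequently the first projection $\pi_1:Z\to H^0(X,\Ascr\otimes\Lscr)$ is not dominant, and its complement contains the desired open $U$.

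The main obstacle is the linear-independence step: one must verify that at every $y\in L$ the equations $P_s(y)=0$ and $dP_s(y)=0$ impose four linearly independent conditions on the $1$-jet of $s$ at $x=\pi(y)$. This reduces to a pointwise computation on $\Ascr_x\otimes\Lscr_x$ after etale-locally trivializing $\Ascr$ as a matrix algebra, where the rank of the map sending a matrix and its first-order variation to the associated $1$-jet of the characteristic polynomial can be computed by hand using the flexibility of $P_s$ in the matrix entries of $s$. Ampleness then promotes this pointwise statement to the required global transversality on $H^0(X,\Ascr\otimes\Lscr)$. The remaining ingredients---connectedness, generic etaleness, splitting---all follow cleanly from the generic analysis in the first step.
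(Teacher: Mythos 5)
Your plan for (2), (3), and connectedness (generic stalk, separable characteristic polynomial, maximal etale subfield) is correct and is essentially the paper's argument stated at the generic point instead of a chosen closed point; this even makes connectedness more transparent. For smoothness, your incidence set $Z\subset H^0(X,\Ascr\otimes\Lscr)\times L$ cut out by $P_s(y)=0$, $dP_s(y)=0$ is exactly the paper's $\widetilde W$ before the eigenvalue is projected away: the paper forms $W=\mathrm{pr}(\widetilde W)\subset A/m_x^2A$ and proves $\mathrm{codim}\,W\ge 3$ (Lemma \ref{ref-6.3-13}), then invokes surjectivity of the $1$-jet map (Lemma \ref{ref-6.4-14}) and a dimension count; you keep $\mu$ as a coordinate, want $\mathrm{codim}\ge 4$ in the fiber of $Z\to L$, and do the same dimension count. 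Same two ingredients, same conclusion, slightly different bookkeeping.

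The genuine gap is precisely the step you flag as the ``main obstacle,'' and the way you phrase it would not go through. The four conditions on the $1$-jet $\bar s=(s_0,s_1,s_2)$ are $\det(\mu-s_0)=0$, $\tfrac{d}{dT}\det(T-s_0)|_{T=\mu}=0$, $\tr(\operatorname{adj}(\mu-s_0)\,s_i)=0$ for $i=1,2$. These are \emph{not} linear in $\bar s$, and more to the point they are not everywhere ``independent'': on the stratum where $\operatorname{adj}(\mu-s_0)=0$ (equivalently $\mu-s_0$ has rank $\le d-2$, e.g.\ $s_0=\mu\cdot\mathrm{id}$), the last two conditions are identically zero and impose nothing on $(s_1,s_2)$. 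So the claim ``at every $y$ the equations impose four linearly independent conditions on the $1$-jet'' is false as stated. What is true — and what has to be proved — is that the total codimension of the common zero locus is still $\ge 4$, because the degenerate stratum in $s_0$ alone has high codimension. Establishing this requires a case analysis, and this is exactly what the paper's Lemma \ref{ref-6.3-13} supplies by putting $s_0$ in Jordan form and splitting into the case of two Jordan blocks with equal eigenvalue versus a single block of size $>1$ (where the $\ast$-entry condition on $\bar s_1,\bar s_2$ appears). You would need to reproduce that stratified argument; without it, the dimension count is unsupported.
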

\begin{remark} This can be extracted from \cite[Thm 8.1.11]{AdJ}. However this
last result is much more precise and it does not assume that $\Ascr$ is generically
a division algebra. 
\end{remark}
The following key lemma analyzes the local situation.
\begin{lemma}
\label{ref-6.3-13}
Let $(R,m)$ be a regular
local $k$-algebra of dimension $2$ with residue field $k$. Let $A$ be an Azumaya algebra of degree $d$
over $R$.  Then there exists a closed subvariety $W$ in $A/m^2A$ (the
latter viewed as a vector space) of codimension 3 such that for $s\in A$
we have that $Y_s $ is regular if $\bar{s}\not\in W$ (here and below $\bar{s}$ stands
for $s\mod m^2$).
\end{lemma}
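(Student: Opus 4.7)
The plan is to reduce to the split case $A=M_d(R)$, identify the closed points of $Y_s$ together with a simple regularity criterion there, and then decompose the locus where $Y_s$ fails to be regular into strata according to the eigenvalue structure of $\bar s_0=s\bmod m$. Each stratum will have codimension at least $3$ in $A/m^2A$.

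For the reduction: regularity of $Y_s$ is preserved and reflected under the faithfully flat base change $R\to\hat R=k[[x,y]]$, and the natural map $A/m^2A\to\hat A/m^2\hat A$ is an isomorphism of $k$-vector spaces. Since $\hat R$ is strictly Henselian with algebraically closed residue field, $\Br(\hat R)=0$, so $\hat A\cong M_d(\hat R)$, and $P_s(T)$ becomes the ordinary characteristic polynomial $\det(TI-s)$. Then $Y_s=\Spec R[T]/(P_s(T))$ is a hypersurface in the regular three-dimensional scheme $\Spec R[T]$, and its closed points correspond to the distinct eigenvalues $\lambda\in k$ of $\bar s_0$, with maximal ideal $M_\lambda=(m,T-\lambda)$. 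Taylor-expanding $P_s(T)$ in $T-\lambda$, regularity of $Y_s$ at $M_\lambda$ is equivalent to
\[
P_s(\lambda)\notin m^2\quad\text{or}\quad P_s'(\lambda)\notin m.
\]
The second alternative holds precisely when $\lambda$ is a simple eigenvalue, so regularity can fail only at a multiple eigenvalue, and then exactly when $P_s(\lambda)\in m^2$.

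To handle the remaining condition, pick the canonical $k$-splitting $R=k\oplus m$, write $s=s_0+s'$ with $s_0\in M_d(k)$ lifting $\bar s_0$ and $s'\in mM_d(R)$, and set $M=\lambda I-s_0$. The standard expansion $\det(M-X)=\det(M)-\operatorname{tr}(\operatorname{adj}(M)X)+O(X^2)$ together with $\det(M)=0$ yields
\[
P_s(\lambda)\equiv -\operatorname{tr}(\operatorname{adj}(M)s')\pmod{m^2}.
\]
Two cases occur. In case (a), $\operatorname{rank}(M)\leq d-2$ (equivalently $\lambda$ has geometric multiplicity $\geq 2$), so $\operatorname{adj}(M)=0$ and $P_s(\lambda)\in m^2$ holds automatically, making $Y_s$ singular at $M_\lambda$ for every $s'$. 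The set of $\bar s_0\in M_d(k)$ admitting such a $\lambda$ is the image of the closed subset $\{(\bar s_0,\lambda):\operatorname{rank}(\bar s_0-\lambda I)\leq d-2\}$ of $M_d(k)\times\AA^1$, which has codimension $4$ in each fiber over $\AA^1$, hence codimension $\geq 3$ in $A/mA$ and therefore in $A/m^2A$. In case (b), $\operatorname{rank}(M)=d-1$ (a Jordan block of size $\geq 2$ at $\lambda$), so $\operatorname{adj}(M)=vu^{T}$ for some nonzero $u,v$; writing $s'\equiv xa+yb\pmod{m^2A}$, the condition $\operatorname{tr}(\operatorname{adj}(M)s')\in m^2$ becomes $u^{T}\bar a v=u^{T}\bar b v=0$, two independent nonzero linear conditions on $(\bar a,\bar b)\in M_d(k)^2$, contributing codimension $2$ in $mA/m^2A$; combined with the codimension-$1$ discriminant condition on $\bar s_0$, this gives codimension $1+2=3$ in $A/m^2A$.

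Taking $W$ to be the Zariski closure of the union of the bad loci in (a) and (b) yields a closed subvariety of $A/m^2A$ of codimension $3$ containing every $\bar s$ for which $Y_s$ fails to be regular. The main technical point is making the codimension count in case (b) uniform: one must view the two linear forms $X\mapsto u^{T}Xv$ as cutting out a rank-$2$ family of conditions as $\bar s_0$ varies over the Jordan-block stratum (using that $u,v\neq 0$ throughout), and verify that further degeneracies---Jordan blocks of larger size, several multiple eigenvalues, etc.---all fall into case (a) or (b) with codimension $\geq 3$. The reduction to $A=M_d(R)$ via completion and the vanishing of $\Br(k[[x,y]])$ are standard and should be treated only briefly.
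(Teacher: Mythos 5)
Your proof is correct and follows essentially the same route as the paper's. Both proofs reduce to the split case (you via completion and $\Br(k[[x,y]])=0$, the paper by choosing an isomorphism $A/m^2A\cong M_d(R/m^2)$, which is the same reduction at the level that actually matters), both use the regularity criterion at a closed point over $o$ in terms of the constant and linear coefficients of $P_s$ (your $P_s(\lambda)\notin m^2$ or $P_s'(\lambda)\notin m$ is precisely the paper's $a_d\notin m^2$ or $a_{d-1}\notin m$ after translating by $\lambda$), and both split the bad locus into the same two Jordan-type strata with the same codimension counts; your cases (a) and (b) are exactly the paper's conditions (1) and (2). The one point you flag as needing care, namely that the two linear forms $X\mapsto u^{\mathsf T}Xv$ vary in a rank-two family over the $\rk=d-1$ stratum, is also handled somewhat informally in the paper (``From this description it follows easily that $W$ has codimension three''), so you are not missing anything the source proof supplies.
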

\begin{proof} We choose an isomorphism $A/m^2A\cong M_d(R/m^2)$ and
an isomorphism $R/m^2\cong k[\eta,\zeta]/(\eta^2,\zeta^2,\eta\zeta)$.
Using these isomorphisms we let $\GL_d(k)$ act on $A/m^2A$ by conjugation.

We let $W$ be the of the locus of elements
$\bar{s}=s_0+s_1\eta+s_2\zeta$ in $A/m^2A$ such that $s_0$ has a repeated eigenvalue
$\mu$ 
with the property $\det(\bar{s}-\mu)=0$. We claim this is a closed subset of 
$A/m^2$.
To prove this 
consider the subset $\widetilde{W}$ of $\PP((A/m^2)\oplus k)$  defined by
\[
\widetilde{W}=
\{(\bar{s},\mu)\mid \text{$\mu$ is a repeated eigenvalue of $s_0$
and $\det(\bar{s}-\mu)$}=0\}
\]
This is closed since the condition that $\mu$ is a repeated eigenvalue can be expressed
as 
\[
\rk (s_0-\mu)^2\le d-2
\]
Hence $\widetilde{W}$ is a projective scheme.
The projection map $\widetilde{W}\r \PP(A/m^2A): (\bar{s},\mu)\mapsto
\bar{s}$ is well defined  since if $\bar{s}=0$ then
$\det(\mu)=0$ and hence $\mu=0$.  Hence
the image of $\widetilde{W}$ in $\PP(A/m^2A)$  is closed.  Since $W$ is the cone 
 of the image of $\widetilde{W}$ in $\PP(A/m^2A)$ it follows that $W$ is
also closed.

We can describe the elements of $W$ by putting $s_0$ in
Jordan normal form.  One verifies that $\bar{s}\in W$ if and only if one
of the following conditions holds
\begin{enumerate}
\item $s_0$ has two Jordan blocks with equal eigenvalues. 
\item $s_0$ has a Jordan block of size $>1$ of the form
\[
\begin{pmatrix}
\mu &1 & \cdots&0 & 0\\
0&\mu &\cdots&0 &0\\
\vdots & \vdots& &\vdots&\vdots\\
0 &0&\cdots&\mu & 1\\
\ast &0&\cdots &0&\mu
\end{pmatrix}
\]
such that $\bar{s}_{1,ij}=\bar{s}_{2,ij}=0$ where $(i,j)$ refers to
the entry of $\bar{s}$ marked by ``$\ast$'' in the above matrix.
\end{enumerate}
From this description it follows easily that $W$ has codimension three.

\medskip

Assume now $s\in A$ such that $\bar{s}\not\in W$. Denote the closed point
of $\Spec R$ by $o$.

We will the describe
the equation of $Y_s$ in the neighborhood of a closed point $(o,\mu)\in
Y_s$ where $\mu$ is an eigenvalue of $s_0$. Replacing
$s$ by $s-\mu$ we may assume $\mu=0$. The equation of $Y_s$ around
$(o,0)$ will be
\[
P_s(T)=T^d+a_1T^{d-1}+\cdots +a_{d-1}T+a_d
\]
where $a_i\in R$. We have
\begin{align*}
a_d&\cong \pm\det(\bar{s})\mod m^2\\
a_{d-1}&\cong \pm\operatorname{tr}(\operatorname{ad}(\bar{s}))\mod m^2
\end{align*}
%
To be able to compute we put $s_0$ in Jordan normal form.
We separate two cases.
\begin{enumerate}
\item The eigenvalue $0$ has multiplicity one. 
In that case $a_{d-1}\in R^\ast$.
\item The eigenvalue $0$ has higher multiplicity. Since $\bar{s}\not
\in W$ we obtain from the definition of $W$: $\bar{a}_d=\det(\bar{s})\neq 0$. In
particular $a_d\not\in m^2$.
\end{enumerate}
Hence in both cases $P_{s}(T)\not\in (m,T)^2\subset R[T]_{(T)}$. Thus $Y_s$
 is regular in $(o,0)$.
\end{proof}
We need a trivial lemma about ample linebundles.
\begin{lemma}
\label{ref-6.4-14}
Let $\Mscr$ be ample on $X$ and let $\Fscr$ be a coherent sheaf on
$X$.  Then for $r\gg 0$ and for $\Lscr=\Mscr^r$ we have
that for all $x$ the map 
\[
H^0(X,\Fscr\otimes_X \Lscr)\r H^0(X,\Fscr/m_x^2\Ascr\otimes_X \Lscr)
\]
is surjective. 
\end{lemma}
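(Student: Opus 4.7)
The plan is to reduce the statement to a cohomological vanishing that is uniform in $x$. From the short exact sequence
\[
0\r \Iscr_x^2\Fscr\otimes\Lscr \r \Fscr\otimes\Lscr \r (\Fscr/\Iscr_x^2\Fscr)\otimes\Lscr\r 0,
\]
the surjectivity on $H^0$ follows once we know $H^1(X,\Iscr_x^2\Fscr\otimes\Lscr)=0$. For a single $x$, ordinary Serre vanishing applied to the coherent sheaf $\Iscr_x^2\Fscr$ gives this for $r\gg 0$. The nontrivial point is that we need one value of $r$ that works simultaneously for every closed point $x\in X$.

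To get this uniformity, I would work on $X\times X$ with the two projections $p_1,p_2\colon X\times X\to X$. Let $\Iscr_\Delta\subset \Oscr_{X\times X}$ be the ideal sheaf of the diagonal and consider the coherent sheaf
\[
\Gscr=\Iscr_\Delta^{2}\cdot p_2^{\ast}\Fscr.
\]
The morphism $p_1$ is proper and flat, and $p_2^{\ast}\Mscr$ is $p_1$-relatively ample because its restriction to an arbitrary fibre $\{x\}\times X$ is isomorphic to $\Mscr$, which is ample on $X$. By relative Serre vanishing applied to $p_1$, there exists $r_0$ such that for every $r\geq r_0$
\[
R^1 p_{1\ast}\bigl(\Gscr\otimes p_2^{\ast}\Mscr^{r}\bigr)=0.
\]

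The second step is base change. Because $R^1 p_{1\ast}$ vanishes and $p_1$ is flat and proper, cohomology and base change yields
\[
H^1\!\bigl(\{x\}\times X,\,(\Gscr\otimes p_2^{\ast}\Mscr^{r})|_{\{x\}\times X}\bigr)=0
\]
for every closed point $x$. Since the diagonal is a regularly embedded subscheme of $X\times X$ (using that $X$ is smooth), the restriction of $\Iscr_\Delta^{2}$ to $\{x\}\times X\cong X$ is precisely $\Iscr_x^{2}$, and the displayed vanishing therefore specialises to $H^1(X,\Iscr_x^{2}\Fscr\otimes\Lscr)=0$ uniformly in $x$ once $\Lscr=\Mscr^{r}$ with $r\geq r_0$. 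This then delivers the surjectivity on global sections via the long exact sequence above.

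The main obstacle I expect is checking the last compatibility, namely that the pullback of $\Iscr_\Delta^{2}\cdot p_2^{\ast}\Fscr$ to a fibre genuinely computes $\Iscr_x^{2}\Fscr$, and that base change applies. Both become routine once one invokes that $\Delta\subset X\times X$ is a regular embedding (smoothness of $X$) so that $\Iscr_\Delta^{2}$ is flat along $p_1$ in a neighbourhood of $\Delta$, and elsewhere $\Iscr_\Delta^{2}=\Oscr_{X\times X}$. Everything else is formal once relative Serre vanishing is in place.
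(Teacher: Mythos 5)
Your reduction to relative Serre vanishing on $X\times X$ is a reasonable instinct, but the cohomology-and-base-change step has a genuine gap. Both that theorem and the identification of the fibre restriction of $\Gscr=\Iscr_\Delta^{2}\cdot p_2^{\ast}\Fscr$ with $\Iscr_x^{2}\Fscr$ require $\Gscr$ itself to be flat over $X$ along $p_1$. Your justification establishes flatness of $\Iscr_\Delta^{2}$ alone (which is fine, via $0\r\Iscr_\Delta^2\r\Oscr_{X\times X}\r\Oscr_{\Delta^{(2)}}\r0$ and $p_1$-flatness of both outer terms), but flatness of the product with $p_2^{\ast}\Fscr$ is a separate matter and fails once $\Fscr$ is not locally free. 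For a concrete failure take $\Fscr$ to be the skyscraper $k(z)$ at a closed point $z$ of the surface: then $\Gscr$ is the ideal $m_{(z,z)}^{2}\subset\Oscr_{X\times\{z\}}\cong\Oscr_X$, which is not $\Oscr_X$-flat, and its fibre at $z$ is $m_z^{2}/m_z^{3}\neq 0$ whereas $\Iscr_z^{2}\Fscr=0$, so the fibrewise identification collapses. This is not a case you can quietly exclude: the paper applies the lemma in the Bertini argument with $\Fscr$ the ideal sheaf of a finite set of points on the surface, which is coherent but not locally free.

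Your framework can in fact be salvaged by not passing to fibrewise $H^1$ at all: from $0\r\Gscr\otimes p_2^{\ast}\Lscr\r p_2^{\ast}(\Fscr\otimes\Lscr)\r\Oscr_{\Delta^{(2)}}\otimes p_2^{\ast}(\Fscr\otimes\Lscr)\r0$, the vanishing $R^1p_{1\ast}(\Gscr\otimes p_2^{\ast}\Lscr)=0$ gives directly a surjection of $\Oscr_X$-modules $H^0(X,\Fscr\otimes\Lscr)\otimes_k\Oscr_X\r p_{1\ast}\bigl(\Oscr_{\Delta^{(2)}}\otimes p_2^{\ast}(\Fscr\otimes\Lscr)\bigr)$ (the sheaf of first-order principal parts of $\Fscr\otimes\Lscr$), and one finishes by restricting this surjection of sheaves to each closed point $x$, with no flatness hypothesis required. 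The paper's own proof avoids $X\times X$ entirely and is considerably more elementary: for $t\gg0$ present $\Fscr\otimes\Mscr^{t}$ as a quotient of $\Oscr_X^{\oplus N}$, use that a very ample $\Mscr^{s}$ separates first-order jets at every point, and conclude by a commutative square. Since $s$ and $t$ are chosen once and for all, uniformity in $x$ is automatic, and the argument handles arbitrary coherent $\Fscr$.
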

\begin{proof}
 For $s\gg 0$ we have that $\Mscr^s$ is very
ample \cite{H}, i.e.\  for all $x\in X$ we
have that
\[
H^0(X,\Mscr^s)\r H^0(X,\Mscr^s\otimes \Oscr_X/m^2_x)
\]
is surjective. See \cite[Prop 7.3]{H}. 

For $t\gg 0$ we have that $\Fscr\otimes \Mscr^t$ is generated by global
sections \cite{H}. I.e. there is a surjective map
\[
\Oscr_X^{\oplus N}\r \Fscr\otimes \Mscr^t
\]
and hence surjective maps
\[
(\Mscr^s_X)^{\oplus N}\r \Fscr\otimes \Mscr^{s+t}
\]
\[
(\Mscr^s_X\otimes \Oscr_X/m_x^2)^{\oplus N}\r \Fscr\otimes \Mscr^{s+t}\otimes \Oscr_X/m_x^2
\]
Since the last map is between sheaves concentrated in a point, it remains
surjective after applying $H^0(X,-)$. 

The lemma now follows from the following commutative diagram
\[
\begin{CD}
H^0(X,\Mscr^s)^{\oplus N} @>>> H^0(X,\Fscr\otimes \Mscr^{s+t})\\
@VVV @VVV\\
H^0(X,\Mscr^s\otimes \Oscr_X/m^2_x)^{\oplus N} @>>> H^0(X,\Fscr\otimes \Mscr^{s+t}\otimes \Oscr_X/m^2_x)
\end{CD}
\]
using the fact that the lower horizontal and the leftmost vertical maps
are surjective.
\end{proof}
\begin{proof}[Proof of Theorem \ref{ref-6.1-12}] 
We prove smoothness of $Y_s$ first. By Lemma \ref{ref-6.4-14} we have that for $r\gg 0$ the map
\[
p:H^0(X,\Ascr\otimes_X \Lscr)\r H^0(X,\Ascr/m_x^2\Ascr\otimes_X \Lscr)
\]
is surjective for all $x$. Let $W_x\subset H^0(X,\Ascr/m_x^2\Ascr\otimes_X \Lscr)$
be the subvariety we denoted by $W$ in Lemma \ref{ref-6.3-13} (choosing
an arbitrary trivialization of $\Lscr$ in a neighborhood of $x$).

Let $s_1,\ldots, s_n$ be a basis for $H^0(X,\Ascr\otimes_X \Lscr)$.
Put $\AA^n=\Spec k[t_1,\ldots,t_n]$ and put $\tilde{s}=t_1s_1+\cdots+t_ns_n$. 
Let $\psi:Y_{\tilde{s}}\r X\times \AA^n$ be the ramified cover defined by
$\tilde{s}$. \def\NSm{\operatorname{NSm}} Let $\NSm(Y_{\tilde{s}})$ be the locus where the projection
$Y_{\tilde{s}}\r \AA^n$ is not smooth (i.e.\ where $\Omega_{Y_{\tilde{s}}/\AA^n}$
is not locally free). We need to prove that the image
of $\NSm(Y_{\tilde{s}})$ is not $\AA^n$.

For $x\in X$ we have shown above that $\psi^{-1}(x\times \AA^n)\cap
\NSm(Y_{\tilde{s}})\subset \psi^{-1}(x\times p^{-1}(W_x))$. It follows
that $\NSm(Y_{\tilde{s}})$ has codimension $\ge 3$ in $Y_{\tilde{s}}$.
Since $\dim X=2$ this means that the image of $\NSm(Y_{\tilde{s}})$ cannot be
the full $\AA^n$. This finishes the proof of smoothness.

Now we prove splitting and generic etaleness. Fix $x\in X$. There is a
Zariski open $\bar{V}\subset A/mA\cong M_d(k)$ such that for $t\in
\bar{V}$ we have that the characteristic polynomial of $t$ has $d$
distinct roots. Let $V$ be the inverse image of $\bar{V}$ under the surjective
map
\[
p':H^0(X,\Ascr\otimes_X \Lscr)\r H^0(X,\Ascr/m_x\Ascr\otimes_X \Lscr)
\]
Hence $Y_s/X$ is unramified in $x$ if $s\in V$.  Since $Y_s/X$ is flat we deduce
that $Y_s/X$ is etale in a neighborhood of $x$. Hence $Y_s/X$ is
generically etale if $s\in V$.

If $k(Y_s)$ does not split $D=\Ascr_\eta$ then the map
\[
k(Y_s)\r D:T\mapsto s
\]
must land in a non-maximal subfield of $D$. But then $P_s(T)$ is not
the minimal polynomial of $s$ and hence it is has multiple roots. It
follows that $Y_s/X$ is not generically etale.
\end{proof}
\section{Elementary transformations}
Suppose $X$ is a scheme
and $I$ is an invertible ideal in $\Oscr_X$.  Let $\Ascr$ be an Azumaya algebra on
$X$ and assume that $\bar{\Ascr}=\Ascr/I\Ascr=\End_{\Oscr_D}(\bar{V})$ with $\bar{V}$ 
a vector bundle on  $D=V(I)$.  Suppose we have a
subbundle $\bar{F}$ of $\bar{V}$. 

We define
\[
\bar{\Bscr}=\{\phi\in \End_{\Oscr_D}(\bar{V})\mid \phi(\bar{F})\subset \bar{F}\}
\]
and we let $\Bscr$ be the inverse image of $\bar{\Bscr}$ in $\Ascr$. Define
\[
\bar{J}=\{\phi\in \End_{\Oscr_D}(\bar{V})\mid \phi(\bar{V})\subset \bar{F}\}
\]
and let $J$ be the inverse image of $\bar{J}$ in $\Ascr$. Thus we have
inclusions
\[
I\Ascr\subset J\subset \Bscr\subset \Ascr
\]
It is clear that $J$ is a twosided ideal in $\Bscr$. It is also a right
$\Ascr$ ideal. We put 
\[
\Ascr'=\End_\Ascr(J)
\]
and call $\Ascr'$ the \emph{elementary transform} of $\Ascr$ with respect to the data 
$(D,\Fscr,\Vscr)$. 
\begin{remark} Max Lieblich points out that this definition can be generalized and in
this way
actually becomes more transparent. Instead of giving $\Vscr$ and $\Fscr$ as input
one may start directly with a locally free right ideal $\bar{J}\in \bar{\Ascr}$ which is locally
a direct summand. In this way we do not have to assume that $\bar{\Ascr}$ is split. 
In these notes we stick to the original definition.
\end{remark}
To analyze the properties of elementary transformations we may work
locally for the etale topology. Switching to an affine setting with
$X=\Spec R$ we may assume that $\Ascr=M_n(R)$.  $\bar{V}=\bar{R}^n$,
$\bar{F}=\bar{R}^k$. Then we find
\[
J=
\begin{pmatrix}
I & \cdots & I  &   I & \cdots & I \\
\vdots & & \vdots  &  \vdots & & \vdots\\
I & \cdots &I  & I & \cdots & I \\
R & \cdots& R &   R & \cdots & R         \\
\vdots & & \vdots  &  \vdots & & \vdots        \\
R & \cdots & R  & R & \cdots & R  
\end{pmatrix}
\]
\[
\Bscr=\begin{pmatrix}
R & \cdots & R  &   I & \cdots & I \\
\vdots & & \vdots  &  \vdots & & \vdots\\
R & \cdots &R  & I & \cdots & I \\
R & \cdots& R &   R & \cdots & R         \\
\vdots & & \vdots  &  \vdots & & \vdots        \\
R & \cdots & R  & R & \cdots & R           
\end{pmatrix}
\]
\[
\Ascr'=\begin{pmatrix}
R & \cdots & R  &   I & \cdots & I \\
\vdots & & \vdots  &  \vdots & & \vdots\\
R & \cdots &R  & I & \cdots & I \\
I^{-1} & \cdots& I^{-1} &   R & \cdots & R         \\
\vdots & & \vdots  &  \vdots & & \vdots        \\
I^{-1} & \cdots & I^{-1}  & R & \cdots & R           
\end{pmatrix}
\]
We deduce immediately
\begin{lemma} The elementary transform of an Azumaya algebra is an Azumaya algebra. 
\end{lemma}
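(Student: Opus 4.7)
The plan is to verify the lemma étale-locally on $X$, since the property of being an Azumaya algebra is by definition local in the étale topology. The explicit matrix descriptions of $J$, $\Bscr$ and $\Ascr'$ displayed just before the statement already tell us the answer; we only need to recognise $\Ascr'$ as (locally) a matrix algebra. Concretely, on a small enough étale neighborhood we may assume $X=\Spec R$, that $\Ascr=M_n(R)$, that $I=(\pi)$ for a nonzerodivisor $\pi\in R$, and that the vector bundle $\bar V$ on $D=V(I)$ is trivialized as $\bar R^n$ with $\bar F=\bar R^k$ embedded as the first $k$ coordinates. Under these identifications, the displayed matrix forms for $J$, $\Bscr$ and $\Ascr'$ become literal equalities in $M_n(R)$ (and in $M_n(R[\pi^{-1}])$ for $\Ascr'$).

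The key step is then the following explicit trivialization. Let $g=\diag(\pi,\dots,\pi,1,\dots,1)\in\GL_n(R[\pi^{-1}])$ with $\pi$ appearing in the first $k$ diagonal entries. Conjugation by $g$ is an $R$-algebra automorphism of $M_n(R[\pi^{-1}])$, and for $a\in M_n(R)$ the $(i,j)$-entry of $gag^{-1}$ equals $g_{ii}a_{ij}g_{jj}^{-1}$. This lies in $R$ when $i,j\le k$ or $i,j>k$, in $\pi R=I$ when $i\le k<j$, and in $\pi^{-1}R=I^{-1}$ when $j\le k<i$. Comparing with the matrix form of $\Ascr'$ above, we obtain $g M_n(R)g^{-1}=\Ascr'$. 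Hence $\Ascr'\cong M_n(R)$ as $R$-algebras, so $\Ascr'$ is coherent, locally free of rank $n^2$ over $\Oscr_X$, and étale-locally a matrix algebra, i.e.\ Azumaya.

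The main obstacle, such as it is, lies in the initial reduction: one has to arrange simultaneously a splitting $\Ascr\cong M_n(R)$, a local generator for $I$, and a basis of $\bar V$ compatible with the subbundle $\bar F$, on a common étale neighborhood. This is standard: $\Ascr$ splits étale-locally by the definition of an Azumaya algebra, $I$ is trivialized Zariski-locally by invertibility, and the decomposition $\bar V=\bar F\oplus\bar V/\bar F$ can be lifted compatibly by choosing bases after shrinking further. Once this setup is achieved, the conjugation argument above makes the conclusion immediate and, in passing, produces an honest isomorphism class for $\Ascr'$ rather than just the abstract Azumaya property.
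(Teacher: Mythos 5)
Your proposal is correct and takes essentially the same route as the paper. The paper reduces to the \'etale-local situation $\Ascr=M_n(R)$, $\bar V=\bar R^n$, $\bar F=\bar R^k$, displays the matrix form of $\Ascr'$ with entries in $R$, $I$, $I^{-1}$, and then says the conclusion is immediate; you fill in the one step the paper leaves implicit by exhibiting the conjugating element $g=\diag(\pi,\dots,\pi,1,\dots,1)$ with $g M_n(R)g^{-1}=\Ascr'$, which cleanly certifies that $\Ascr'$ is (locally) a matrix algebra.
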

Now we work globally again.  Let $\bar{Q}=\bar{V}/\bar{F}$. Directly
from the definition we deduce that there is a short exact sequence
\[
0\r \Bscr\r \Ascr\r \Hom_{\bar{R}}(\bar{F},\bar{Q})\r 0
\]
We will now construct a similar exact sequence for $\Ascr'$.  The inclusion
$I\Ascr\subset J$ yields a map
\[
\Ascr'=\Hom_{\Ascr}(J,J)\r I^{-1} J=I^{-1}\otimes J\r I^{-1}\otimes_{\Oscr_D} \bar{J}
\]
One checks locally that the image of this map is precisely
\[
I^{-1}\otimes_{\Oscr_D} \Hom_{\Oscr_D}(\bar{Q},\bar{F})
\]
and likewise one checks locally that the kernel of the induced
map
\[
\Ascr'\r I^{-1}\otimes_{\Oscr_D} \Hom_{\Oscr_D}(\bar{Q},\bar{F})
\]
is precisely $\Bscr$. Taking into account that
\[
I^{-1}\otimes_{\Oscr_D}
\Hom_{\Oscr_D}(\bar{Q},\bar{F})=\Hom_{\Oscr_D}(I/I^2\otimes_{\Oscr_D} \bar{Q},
\bar{F})
\]
we get an exact sequence
\[
0\r \Bscr \r \Ascr'\r \Hom_{Oscr_D}(I/I^2\otimes_{\Oscr_D}\bar{Q},
\bar{F})\r 0
\]
There is a proof of the following proposition in \cite{AdJ}. 
We give the proof in \cite{CT1}.
\begin{proposition}
  Let $X$ be a scheme, $\Ascr\in \Az_n(X)$, with $n$ invertible
  on $X$ and $I\subset \Oscr_X$ an invertible ideal. Assume there is a
  $\Oscr_D=\Oscr_X/I$ vector bundle $\bar{V}$ such that
$\bar{\Ascr}=\Ascr/I\Ascr=\HEnd(\bar{V})$. Let $\bar{F}\subset \bar{V}$ be a subbundle
of constant rank $r$. Let $\Ascr'$ be the associated elementary transform. Then
\begin{equation}
\label{ref-7.1-15}
\cl(\Ascr')=\cl(\Ascr)+r [I] \in H^2(X,\mu_n)
\end{equation}
where $[I]$ is the image of $I$ under the composition
\[
\Pic(X)\r \Pic(X)/n\r H^2(X,\mu_n)
\]
\end{proposition}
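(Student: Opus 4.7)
The plan is to reduce to the étale-locally split case and then compare Čech cocycles in $H^2(X,\mu_n)$.

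First, the right ideal $J$ is locally free of rank one as a right $\Ascr$-module (as is evident from the matrix description of $J$) and therefore induces a Morita equivalence between $\Ascr$ and $\Ascr'$. In particular $[\Ascr']=[\Ascr]$ in $\Br(X)$, so $\cl(\Ascr')-\cl(\Ascr)\in H^2(X,\mu_n)$ lies in $\Pic(X)/n\Pic(X)\hookrightarrow H^2(X,\mu_n)$ via the Kummer sequence \eqref{ref-4.1-1}. It then suffices to identify this difference with $r[I]$ modulo $n$.

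I work étale locally on $X$, choosing a cover $\{U_i\}$ such that on each $U_i$: (a) $I|_{U_i}=(t_i)$; (b) $\Ascr|_{U_i}\cong \HEnd(V_i)$ with $V_i\cong \Oscr_{U_i}^n$; and (c) under the induced splitting of $\bar\Ascr|_{U_i\cap D}$, the subbundle $\bar F$ is standard, spanned by $r$ of the basis vectors. In this frame the local description preceding the proposition identifies $\Ascr'|_{U_i}=\HEnd(V_i')$, where $V_i'=\{v\in V_i:v\bmod I\in \bar F\}$ is the elementary modification of $V_i$ along $\bar F$. From the short exact sequence $0\to V_i'\to V_i\to \bar V/\bar F\to 0$ with quotient of $\Oscr_D$-rank $n-r$ one computes $\det V_i'=\det V_i\otimes I^{n-r}$.

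To promote this local picture into a global statement about $\cl$, I compare Čech cocycles. Choose lifts $h_{ij}\in \GL_n(\Oscr_{U_{ij}})$ of the $\PGL_n$-transitions of $\Ascr$ satisfying $h_{ij}h_{jk}h_{ki}=c_{ijk}\cdot\mathbf{1}$, where the $\GG_m$-cocycle $c_{ijk}$ represents $[\Ascr]$. Trivializing $V_i'\cong \Oscr_{U_i}^n$ by rescaling the last $n-r$ coordinates by $t_i$, the corresponding lifts for $\Ascr'$ become $h'_{ij}=D_i^{-1}h_{ij}D_j$, where $D_i=\diag(1,\ldots,1,t_i,\ldots,t_i)$ has $r$ ones followed by $n-r$ copies of $t_i$; in particular $\det h'_{ij}=\det h_{ij}\cdot t_{ij}^{-(n-r)}$ with $t_{ij}=t_i/t_j$ the transition cocycle of $I$. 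Choosing $n$-th roots $\sigma_{ij}^n=\det h_{ij}$ and $\tau_{ij}^n=t_{ij}^{n-r}$ on a further refinement and setting $\sigma'_{ij}=\sigma_{ij}\tau_{ij}^{-1}$, the resulting $\SL_n$-lifts produce $\mu_n$-cocycles whose ratio is $\mu'_{ijk}/\mu_{ijk}=\tau_{ij}\tau_{jk}\tau_{ki}$, i.e.\ the Kummer image of the class $[I^{n-r}]\equiv -r[I]\pmod n$. With the sign convention of Lemma \ref{ref-4.2-2} (which identifies $\cl(\HEnd V)=\pm[\det V]$ only up to a fixed sign) this produces $\cl(\Ascr')-\cl(\Ascr)=r[I]$ in $H^2(X,\mu_n)$.

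The main technical obstacle is bookkeeping: the orientation of the Kummer connecting map, the ``$\pm$'' in Lemma \ref{ref-4.2-2}, and the congruence $n-r\equiv -r\pmod n$ all interact, and the clean answer $+r[I]$ is recovered only once these signs are reconciled.
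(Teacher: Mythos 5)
Your argument reaches the correct conclusion but by a genuinely different route than the paper. The paper first treats the globally split case $\Ascr=\End_X(V)$ directly (so $\Ascr'=\End_X(W)$ with $W=\ker(V\to\bar Q)$, and $\wedge^n W=I^{n-r}\wedge^n V$), and then handles general $\Ascr$ by pulling back to the Brauer--Severi scheme $\pi:Y\to X$ and proving that $\pi^\ast$ is injective on $H^2(-,\mu_n)$ via the Leray spectral sequence and proper base change. You instead work étale-locally with explicit \v{C}ech cocycles for the sequence $1\to\mu_n\to\SL_n\to\PGL_n\to 1$, tracking what happens to the $\mu_n$-twist under the local rescaling $D_i=\diag(1,\dots,1,t_i,\dots,t_i)$. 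This avoids the Brauer--Severi reduction entirely; it is more computational but more elementary, and dispenses with the injectivity argument that the paper needs. Your preliminary remark that $J$ is a rank-one right $\Ascr$-module giving a Morita equivalence (hence $[\Ascr']=[\Ascr]$ in $\Br(X)$) is not in the paper and serves as a useful sanity check that the answer lands in $\Pic(X)/n$, though the subsequent cocycle computation already determines the class completely.

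One small point on the bookkeeping you flagged. With $I|_{U_i}=t_i\Oscr_{U_i}$ the transition cocycle of $I$ as a line bundle (in the convention $\phi_i\circ\phi_j^{-1}$) is $t_j/t_i=t_{ij}^{-1}$, not $t_{ij}$; hence $\tau_{ij}^n=t_{ij}^{\,n-r}$ is the cocycle of $I^{-(n-r)}=I^{\,r-n}$, and $\tau_{ij}\tau_{jk}\tau_{ki}$ is its Kummer image, giving $\cl(\Ascr')-\cl(\Ascr)=(r-n)[I]\equiv r[I]\pmod n$ directly. So the sign actually comes out right without invoking the $\pm$ in Lemma~\ref{ref-4.2-2} (which in any case does not enter your \v{C}ech computation: you compute both $\cl(\Ascr)$ and $\cl(\Ascr')$ from the $\SL_n/\PGL_n$ boundary map, not via $\wedge^n$). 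Lemma~\ref{ref-4.2-2} \emph{is} what the paper uses, since there the split-case class is read off from $\wedge^n W$; you have simply avoided needing it.
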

\begin{proof} We first consider the case that $\Ascr=\End_X(V)$ is split,
$\bar{V}=V/IV$ and $\bar{F}\subset \bar{V}$.  Let $\bar{Q}=\bar{V}/\bar{F}$
and let $W$ be the kernel of $V\r \bar{Q}$.  Then 
$J=\Hom_X(V,W)$ and hence $\Ascr'=\End_X(W)$.

Hence according to Lemma \ref{ref-4.2-2} $\cl(\Ascr)=-[\wedge^n V]$,
$\cl(\Ascr')=-[\wedge^n W]$. So we have to compare $\wedge^n V$ and
$\wedge^n W$.  The inclusion $W\subset V$ certainly yields an
inclusion $i:\wedge^n W\subset \wedge^n V$.

 Working locally we may assume $V=\Oscr_X^n$
and $W=I^{n-r}\oplus \Oscr^{r}_X$. We then find that the image of $i$ is equal
to $I^{n-r} (\wedge^n V)$. Hence
\[
[\wedge^n W]=(n-r)[I]+[\wedge^n V]\cong -r[I]+[\wedge^n V]
\]
(we are working modulo $n$). Hence  by lemma \ref{ref-4.2-2}
\[
\cl(\Ascr')=r[I]+\cl(\Ascr)
\]
Before we continue we mention that we could have taken $\bar{V}=J\otimes V/IV$,
$J\in \Pic(D)$
and $\bar{F}\subset J\otimes V/IV$, but then the elementary transform
with respect to $\bar{F}$ is the same as the elementary transform with
respect to $J^{-1}\otimes \bar{F}\subset V/IV$ and $\rk_{\bar{R}}(\bar{F})=
\rk_{\bar{R}}(J^{-1}\otimes \bar{F})$.
 So our assumption that $\bar{V}=V/IV$ was not a restriction.

\medskip

 Now we assume that $\Ascr$ is general.  The next argument is due to
 Gille. Let $\pi:Y\r X$ be the Brauer-Severi associated to $\Ascr$. Thus
$Y$ splits $\Ascr$ and etale locally $Y$ is a $\PP^{n-1}$ bundle over $X$. 

It is
 easy to see that elementary transform is compatible with base change.
 Thus $\pi^\ast (\Ascr')$ is the elementary transform of $\pi^\ast \Ascr$ with
 respect to $\pi^\ast{\bar{F}}$. Since $\pi^\ast(\Ascr)$ is split we find
\[
\pi^\ast \cl(\Ascr')=\cl(\pi^\ast(\Ascr'))=r[\pi^\ast I]+\cl(\pi^\ast(\Ascr))=r\pi^\ast [I]+\pi^\ast(\cl(\Ascr))
\]
Hence it sufficient to show that
\[
\pi^\ast:H^2(X,\mu_n)\r H^2(Y,\mu_n)
\]
is injective. We compute $H^2(Y,\mu_n)$ using the Leray spectral sequence
\[
H^p(X,R^q \pi_\ast \mu_n)\Rightarrow H^{p+q}(Y,\mu_n)
\]
We have $R^0\pi_\ast \mu_n=\mu_n$, $R^1\pi_\ast \mu_n=0$ (see below). Hence
the $E_2$-page of the spectral sequence is as follows
\[
\begin{matrix}
H^0(X,R^2\pi_\ast \mu_n) & H^1(X,R^2\pi_\ast \mu_n)&H^2(X,R^2\pi_\ast \mu_n)\\
0& 0&0\\
H^0(X, \mu_n) & H^1(X, \mu_n)&H^2(X, \mu_n)
\end{matrix}
\]
so that  we have an exact sequence
\[
0\r H^2(X, \mu_n)\r H^2(Y,\mu_n)\r H^0(X,R^2\pi_\ast \mu_n)
\]
and in particular we get the requested injectivity.
\end{proof}
\begin{lemma} Let $Y\r X$ be a relative Brauer-Severi scheme. Then we have
\begin{align*}
R^0\pi_\ast \mu_n&=\mu_n\\
R^1\pi_\ast \mu_n&=0
\end{align*}
\end{lemma}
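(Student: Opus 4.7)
The plan is to compute $R^q\pi_\ast \mu_n$ for $q=0,1$ via a stalk calculation. Since a relative Brauer--Severi scheme $\pi\colon Y \to X$ is projective, hence proper, the proper base change theorem for torsion étale sheaves identifies
\[
(R^q\pi_\ast \mu_n)_{\bar{x}} = H^q(Y_{\bar{x}}, \mu_n)
\]
for every geometric point $\bar{x} \to X$. The fiber $Y_{\bar{x}}$ is isomorphic to $\PP^{n-1}_{\kappa(\bar{x})}$ over the separably closed residue field $\kappa(\bar{x})$, so everything reduces to computing the $\mu_n$-cohomology of $\PP^{n-1}$ over a separably closed field in which $n$ is invertible.

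For this computation I would run the Kummer sequence on $\PP^{n-1}_k$ with $k$ separably closed. It yields
\[
0 \r H^0(\PP^{n-1}_k,\GG_m)/n \r H^1(\PP^{n-1}_k,\mu_n) \r \Pic(\PP^{n-1}_k)[n] \r 0.
\]
The left term is $k^\ast/(k^\ast)^n = 0$ since $k^\ast$ is $n$-divisible (as $k$ is separably closed and $n$ is prime to the characteristic), and the right term vanishes because $\Pic(\PP^{n-1}_k) = \ZZ$ is torsion-free. Thus $H^1(\PP^{n-1}_k,\mu_n) = 0$. Moreover $\PP^{n-1}_k$ is connected, so $H^0(\PP^{n-1}_k,\mu_n) = \mu_n(k) = \mu_n$.

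Finally I would upgrade the stalk computation to the sheaf statement. Vanishing of all stalks of $R^1\pi_\ast \mu_n$ gives $R^1\pi_\ast \mu_n = 0$ directly. For $R^0\pi_\ast\mu_n$, I consider the adjunction unit $\mu_{n,X} \to \pi_\ast\pi^\ast \mu_{n,X} = \pi_\ast \mu_{n,Y}$; the stalk calculation shows this is an isomorphism at every geometric point, hence an isomorphism of étale sheaves. The only potential obstacle is verifying the two inputs underlying this argument, namely that Brauer--Severi morphisms are proper (so that proper base change applies) and that $\Pic(\PP^{n-1})$ is torsion-free—both are standard, so I expect no genuine difficulty.
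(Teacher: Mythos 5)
Your proof is correct and takes essentially the same route as the paper: reduce to geometric fibers via proper base change, then compute the cohomology of $\PP^{n-1}$ over a separably closed field. The paper simply cites this last computation as well-known (Milne, Example VI.5.6), whereas you spell out the $H^1$ vanishing via the Kummer sequence; that is an elaboration, not a different method.
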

\begin{proof} Using the proper base change theorem \cite[IV-1]{arcata}
it suffices to
prove this in case $X$ is a geometric point. But then $Y=\PP^{n-1}$
and the cohomology of projective space is well-known (e.g \cite[Example VI.5.6]{Milne}). 
\end{proof}
\begin{remark} In \cite{CT1} the formula \eqref{ref-7.1-15} has a $-$ sign. This
  is because it is assumed $I=\Oscr_X(-D)$ for a Cartier divisor $D$
  and the formula is in terms of $D$.
\end{remark}
\begin{remark} Artin and de Jong \cite{AdJ} show that two Azumaya algebras on a
  surface which are birational can be transformed into each other
by an elementary transform based on a smooth curve. 
\end{remark}
\section{Killing obstructions}
The following we take from de Jong \cite[lemma 3.1]{dJ}.
\begin{lemma} Let $X\r X'$ be a closed immersion defined by an
ideal $I$ of square zero. Let $\Ascr$ be an Azumaya algebra
on $X$. If $H^2(X,I\otimes_X (\Ascr/\Oscr_X))=0$ then $A$ lifts to $X'$.
\end{lemma}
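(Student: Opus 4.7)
The plan is to use Čech deformation theory for the étale $\mathrm{PGL}_n$-torsor corresponding to $\Ascr$. The underlying topological space of $X'$ equals that of $X$ (since $X \hookrightarrow X'$ is a nilpotent thickening), so all cohomology computations can be made on $X$.

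First, I would choose an étale covering $\{U_i \to X'\}$ such that $\Ascr|_{U_i \cap X} \cong \uEnd_{\Oscr_{U_i \cap X}}(V_i)$ is split, with $V_i$ a free module. Each split algebra $\uEnd(V_i)$ trivially admits a lift $\uEnd(\tilde V_i)$ to $U_i$ (take $\tilde V_i$ the obvious free module). On overlaps, $\Ascr$ is described by a Skolem–Noether cocycle $\phi_{ij} \in \mathrm{PGL}_n(U_{ij}\cap X)$. Since $\mathrm{PGL}_n$ is smooth over $\ZZ$, the map of sheaves $\mathrm{PGL}_n(\Oscr_{X'}) \to \mathrm{PGL}_n(\Oscr_X)$ is surjective in the étale topology, so after refining the cover I may lift each $\phi_{ij}$ to $\tilde\phi_{ij}\in\mathrm{PGL}_n(U_{ij})$.

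Next I would measure the obstruction to gluing. On triple overlaps, set $c_{ijk} = \tilde\phi_{ij}\tilde\phi_{jk}\tilde\phi_{ki}$. By construction $c_{ijk}$ restricts to the identity on $X$, so it lies in the kernel of $\mathrm{PGL}_n(\Oscr_{X'}) \to \mathrm{PGL}_n(\Oscr_X)$. Lifting to $\mathrm{GL}_n$ and using $I^2=0$, this kernel is identified locally with the additive sheaf $I\otimes (M_n/\Oscr_X)$. However, since the $\tilde\phi_{ij}$ are being composed in the $\Ascr$-twisted setting (the cover was chosen to split $\Ascr$, not an abstract matrix algebra), the cocycle $\{c_{ijk}\}$ takes values in the $\Ascr$-twisted form of this sheaf, namely $\Fscr := I\otimes_{\Oscr_X}(\Ascr/\Oscr_X)$. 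A routine check shows that $\{c_{ijk}\}$ is a Čech 2-cocycle for $\Fscr$, and that changing the lifts $\tilde\phi_{ij}$ by sections of $\Fscr$ on $U_{ij}$ changes $\{c_{ijk}\}$ by a coboundary. Thus one obtains a well-defined obstruction class $[c] \in H^2(X, \Fscr)$, and $\Ascr$ lifts to $X'$ if and only if $[c] = 0$.

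Finally, applying the hypothesis $H^2(X, I \otimes_X (\Ascr/\Oscr_X)) = 0$ kills this obstruction: we may find sections $d_{ij} \in \Fscr(U_{ij})$ with $\delta(d) = c$, correct the lifts to $\tilde\phi'_{ij} = \tilde\phi_{ij}\cdot d_{ij}$, and glue the local algebras $\uEnd(\tilde V_i)$ along the resulting cocycle $\{\tilde\phi'_{ij}\}$ to produce an Azumaya algebra $\Ascr'$ on $X'$ with $\Ascr'|_X \cong \Ascr$.

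The main obstacle is the bookkeeping in the identification of the coefficient sheaf: one must verify that the kernel of $\mathrm{PGL}_n(\Oscr_{X'}) \to \mathrm{PGL}_n(\Oscr_X)$, twisted by the torsor defining $\Ascr$, really coincides with $I\otimes_{\Oscr_X}(\Ascr/\Oscr_X)$ — equivalently, that the adjoint Lie algebra of $\uAut(\Ascr)$ is $\Ascr/\Oscr_X$. This uses in essential ways that $I^2 = 0$ (so the group structure on the kernel is additive), that $n$ is invertible is not needed here (we only pass through the additive linearization), and Skolem–Noether to identify automorphisms of the split algebra with inner ones. Everything else reduces to standard non-abelian Čech cohomology manipulations.
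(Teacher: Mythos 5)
Your proposal is correct and is essentially the same argument as the paper's, just written out explicitly at the level of Čech cocycles: the paper notes local uniqueness of lifts (lifting idempotents), identifies the sheaf of automorphisms of a local lift restricting to the identity as $I\otimes_X\Der_{\Oscr_X}(\Ascr,\Ascr)\cong I\otimes_X(\Ascr/\Oscr_X)$ via Skolem--Noether, and invokes the standard deformation-theory statement that the gluing obstruction lies in $H^2$ of that sheaf. Your $\mathrm{PGL}_n$-torsor lifting, the identification of the kernel of $\mathrm{PGL}_n(\Oscr_{X'})\to\mathrm{PGL}_n(\Oscr_X)$, and the 2-cocycle $c_{ijk}$ are precisely an unpacking of that formalism.
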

\begin{proof} Etale locally $\Ascr$ is a matrix algebra. A matrix
algebra can obviously be lifted. Furthermore any such lifting is itself
a matrix algebra (since we may lift idempotents). 

It follows that etale locally $\Ascr$ lifts uniquely up to isomorphism.
The sheaf of $\Oscr_{X'}$ isomorphisms of a given lift reducing to
the identity on $\Ascr$ is $I\otimes_X \Der_{\Oscr_X}(\Ascr,\Ascr)$. 
It is part of the standard formalism of deformation theory that the obstructions to gluing local lifts are lying
in the $H^2$ of this sheaf.

We finish by observing that there is an exact sequence
\[
0\r \Oscr_X\r \Ascr\r \Der_{\Oscr_X}(\Ascr,\Ascr)\r 0
\]
This is basically Skolem-Noether for derivations. 
\end{proof}
\begin{remark} Etale cohomology and Zariski cohomology coincide for
  quasi-coherent sheaves \cite[Remark 3.8]{Milne}. So for computing
the cohomology group $H^2(X,I\otimes_X (\Ascr/\Oscr_X))$ we may use
Zariski cohomology if we want to. 
\end{remark}
\begin{lemma}
  \label{ref-8.3-16} Let $D/k$ be a smooth connected projective
  curve and $\Vscr$ a vector bundle of rank $n\ge 2$ on $D$. Let
  $T\subset D(k)$ be finite and assume given for every $t\in T$ a one
  dimensional subspace $F_t\subset V_t$. Then there exists a sub line
  bundle $\Fscr\subset \Vscr$ such that for $t\in T$ we have
  $\Fscr_t=F_t$. Moreover, fixing the other data, $\Fscr$ can be chosen
  to be of arbitrarily negative degree.
\end{lemma}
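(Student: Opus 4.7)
The plan is to realize $\Fscr$ as a saturated line subbundle of an elementary-transform subsheaf $\Wscr\subset\Vscr$ built from the data $(F_t)_{t\in T}$. Define $\Wscr$ by the exact sequence of $\Oscr_D$-modules
\[
0\to \Wscr \to \Vscr \to \bigoplus_{t\in T}V_t/F_t\to 0,
\]
where the right-hand term is a skyscraper. Since $D$ is a smooth curve, $\Wscr$ is a locally free subsheaf of rank $n$. Choosing a local trivialisation of $\Vscr$ at $t\in T$ in which $F_t$ is the first coordinate and letting $\pi_t$ be a uniformiser, one checks that the fibre map $\Wscr_t\to\Vscr_t$ has image exactly $F_t$ and kernel $K_t$ of dimension $n-1$. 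Hence any line subbundle $\Fscr\subset\Wscr$ whose fibre at $t$ avoids $K_t$ automatically satisfies $\Fscr_t=F_t$ inside $\Vscr_t$.

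To produce such an $\Fscr$ of arbitrarily negative degree, pick a line bundle $\Lscr$ on $D$ of large positive degree $d$, chosen so that $\Wscr\otimes\Lscr$ is globally generated and the evaluation map $H^0(D,\Wscr\otimes\Lscr)\to \bigoplus_{t\in T}(\Wscr\otimes\Lscr)_t$ is surjective (both hold for $d\gg 0$ by Serre vanishing and Riemann--Roch on the curve). A section $s\in H^0(D,\Wscr\otimes\Lscr)$ with no zeros on $D$ determines a saturated injection $\Lscr^{-1}\hookrightarrow \Wscr\hookrightarrow \Vscr$ of degree $-d$.

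It remains to check that a \emph{generic} such $s$ additionally satisfies $s(t)\notin K_t\otimes\Lscr_t$ for each $t\in T$. Non-vanishing of a generic section is the standard dimension count: the incidence $Z=\{(s,p):s(p)=0\}\subset H^0(\Wscr\otimes\Lscr)\times D$ has dimension $\dim H^0-n+1$ by global generation, so since $n\ge 2$ its image in $H^0$ is a proper closed subset. For each of the finitely many $t\in T$ the condition $s(t)\in K_t\otimes\Lscr_t$ cuts out a proper hyperplane in $H^0$ (using surjectivity of evaluation). Avoiding these bad loci leaves a dense open set of admissible $s$; for any such $s$ the corresponding $\Fscr=\Lscr^{-1}\subset\Vscr$ has $\deg\Fscr=-d$ and $\Fscr_t=F_t$ for all $t\in T$. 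Letting $d\to\infty$ yields arbitrarily negative degrees.

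The only mildly subtle point is the rank-$\ge 2$ hypothesis: it is exactly what forces the incidence $Z$ to have positive codimension in $H^0$ and so permits a globally nonvanishing section. For rank one the lemma would fail, as a line bundle has no nonvanishing sections except when trivial.
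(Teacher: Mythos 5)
Your proof is correct and follows essentially the same route as the paper: form the elementary modification $\Wscr\subset\Vscr$ killing $V_t/F_t$ at each $t\in T$, twist by a sufficiently positive line bundle so that sections of $\Wscr\otimes\Lscr$ separate fibers, and then perform the standard incidence-variety dimension count (valid because $n\ge 2$) together with avoidance of the finitely many hyperplanes coming from the kernels $K_t$. The only cosmetic point is that, as you later acknowledge, a nowhere-vanishing $s$ gives a subbundle of $\Wscr$ but becomes a subbundle of $\Vscr$ only after imposing $s(t)\notin K_t\otimes\Lscr_t$; your subsequent genericity argument supplies exactly this.
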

\begin{proof} Define $\Wscr$ by
\[
0\r \Wscr\r \Vscr\r \oplus_{t\in T} i_{t,\ast}(V_t/F_t)\r 0
\]
Choose a line bundle $\Fscr$ such that $\Wscr\otimes \Fscr^{-1}$ is
generated by global sections. Let $V=\Hom_X(\Fscr,\Wscr)$. We have that the following map is surjective for all $x\in X(k)$
\[
V=H^0(X,\Wscr\otimes_X \Fscr^{-1})\r \Wscr_x\otimes_k \Fscr_x=\Hom_k(\Fscr_x,
\Wscr_x)
\]
In particular $\dim V\ge n$. 

An element $s\in V$ defines a map $\Fscr\r \Wscr\r \Vscr$ as in the statement
of the lemma if the following two conditions hold
\begin{enumerate}
\item If $x\not\in T$ then $s_x$ is not zero.
\item If $x\in T$ then the composition $\Fscr_x\xrightarrow{s_x} \Wscr_x\r F_x$
is not zero.
\end{enumerate}
Let $d=\dim V$. Condition (2) is true on a Zariski open $U$ of $V$
($U$ is the complement of a finite number of hyperplanes). Let
$Z\subset (D-T)\times U$ be the set op pairs $(x, s)$ such that
$s_x=0$. Clearly $Z$ is closed. The dimension of the fibers $Z\r D-T$ is $d-n$.
Thus $\dim Z\le 1+d-n$. Since $n\ge 2$ this is less then $\dim U=d$. It
now suffices to select an $s$ not in the image of the projection
$Z\r U$. 
\end{proof}
\begin{theorem}
\label{ref-8.4-17}
  Let $X/k$ be a smooth connected proper surface. Let $\Ascr\in \Az_n(X)$. Then there is an 
elementary transformation  $\Ascr'\in \Az_n(X)$ of $\Ascr$ such
that
\[
\cl(\Ascr)=\cl(\Ascr')
\]
and 
\[
H^2(X,\Ascr'/\Oscr_X)=0
\]
\end{theorem}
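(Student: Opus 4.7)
The plan is to produce $\Ascr'$ as an elementary transformation of $\Ascr$ along one carefully chosen smooth curve, exploiting the flexibility in Lemma \ref{ref-8.3-16} both to preserve the Brauer class and to kill the $H^2$.

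For the setup I would fix an ample line bundle $\Hscr$ on $X$; for $r\gg 0$, Bertini provides a smooth connected curve $D\in |nr\Hscr|$, and I put $I=\Oscr_X(-D)=\Hscr^{\otimes -nr}$. Since $D$ is a smooth projective curve over the algebraically closed field $k$, Tsen's theorem gives $\Br(D)=0$, so $\Ascr|_D\cong \HEnd_{\Oscr_D}(\bar V)$ for some rank-$n$ bundle $\bar V$ on $D$. I then apply Lemma \ref{ref-8.3-16} to choose a line subbundle $\bar F\subset \bar V$ of sufficiently negative degree, with its values at finitely many prescribed points of $D$ to be pinned down later, and form the elementary transform $\Ascr'$ along $(D,\bar F,\bar V)$. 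Because $\bar F$ has rank one and $[I]\in n\Pic(X)$ vanishes in $\Pic(X)/n\Pic(X)$, Proposition \ref{ref-7.1-15} gives $\cl(\Ascr')=\cl(\Ascr)$, so the Brauer class is preserved automatically.

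For the cohomological statement I compare $\Ascr$ and $\Ascr'$ through their common subalgebra $\Bscr$. Dividing the two Section~7 short exact sequences by $\Oscr_X\subset \Bscr$ yields
\[
0\to \Bscr/\Oscr_X\to \Ascr/\Oscr_X\to i_*(\bar F^{\vee}\otimes \bar Q)\to 0,
\]
\[
0\to \Bscr/\Oscr_X\to \Ascr'/\Oscr_X\to i_*(N_{D/X}\otimes \bar Q^{\vee}\otimes \bar F)\to 0,
\]
where $\bar Q=\bar V/\bar F$. The cokernels are supported on the curve $D$, so have vanishing $H^2$ on $X$; both $H^2(X,\Ascr/\Oscr_X)$ and $H^2(X,\Ascr'/\Oscr_X)$ are therefore quotients of $H^2(X,\Bscr/\Oscr_X)$. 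Taking $\deg\bar F$ very negative makes $\bar F^{\vee}\otimes \bar Q$ highly positive on $D$, so Serre vanishing on the curve forces $H^1(D,\bar F^{\vee}\otimes \bar Q)=0$ and yields $H^2(X,\Bscr/\Oscr_X)\cong H^2(X,\Ascr/\Oscr_X)$. The problem thus reduces to showing that the coboundary
\[
\delta\colon H^1(D,N_{D/X}\otimes \bar Q^{\vee}\otimes \bar F)\longrightarrow H^2(X,\Bscr/\Oscr_X)\cong H^2(X,\Ascr/\Oscr_X)
\]
is surjective.

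The main obstacle is precisely this surjectivity. The strategy is to dualise via Serre duality on $X$, identifying $H^2(X,\Ascr/\Oscr_X)^{\vee}$ with $H^0(X,\Ascr^{(0)}\otimes \omega_X)$ (where $\Ascr^{(0)}$ denotes the trace-zero part, using that $n$ is invertible), and then to leverage the freedom in Lemma \ref{ref-8.3-16} to prescribe $\bar F$ at a finite set $T\subset D$ large enough that the evaluation map $H^0(X,\Ascr^{(0)}\otimes \omega_X)\to \bigoplus_{t\in T}(\Ascr^{(0)}\otimes \omega_X)_t$ is injective; a careful choice of $\bar F|_T$ is then made to force the dual of $\delta$ to annihilate each surviving section. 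A dimension count, enlarging $r$ if necessary, ensures this can be arranged while still keeping $\deg\bar F$ negative enough for the Serre vanishing above.
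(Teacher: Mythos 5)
Your skeleton is right — Tsen to split $\Ascr$ on a smooth ample curve $D$, Lemma~\ref{ref-8.3-16} to get a negative line subbundle $\bar F$ with prescribed fibres, Proposition \eqref{ref-7.1-15} to keep $\cl(\Ascr')=\cl(\Ascr)$ since $[I]\in n\Pic(X)$, and the two short exact sequences relating $\Bscr$, $\Ascr$, $\Ascr'$ — but the crux of the argument is not carried through, and the last paragraph has the logic reversed. You reduce to surjectivity of the coboundary $\delta\colon H^1(D,N_{D/X}\otimes\bar Q^{\vee}\otimes\bar F)\to H^2(X,\Bscr/\Oscr_X)$, equivalently to injectivity of $\delta^{\vee}$ on $H^0(X,\Ascr^{(0)}\otimes\omega_X)$; but then you say you want to ``force the dual of $\delta$ to annihilate each surviving section.'' That is the opposite of injectivity: if $\delta^\vee$ annihilates nonzero sections, $\delta$ is \emph{not} surjective. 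Moreover, injectivity of the bare evaluation map $H^0(X,\Ascr^{(0)}\otimes\omega_X)\to\bigoplus_{t\in T}(\Ascr^{(0)}\otimes\omega_X)_t$ is not what is needed; what is needed is a specific choice of the lines $F_t$ tied to the behaviour of each section, and you never actually produce it.

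The paper's route avoids the coboundary entirely and is worth contrasting. Since $n$ is invertible the trace splitting makes $\Ascr'/\Oscr_X$ self-dual, so $H^2(X,\Ascr'/\Oscr_X)=0$ is equivalent by Serre duality to $H^0(X,\Ascr'/\Oscr_X\otimes\omega_X)=0$, i.e.\ to $H^0(X,\omega_X)\to H^0(X,\Ascr'\otimes\omega_X)$ being an isomorphism. One then works purely with $H^0$: from the two Section~7 sequences one has $H^0(X,\Bscr\otimes\omega_X)\hookrightarrow H^0(X,\Ascr'\otimes\omega_X)$ with cokernel inside $H^0\bigl(D,\HHom_D(I/I^2\otimes\bar Q,\bar F)\otimes\omega_X\bigr)$, which vanishes once $\deg\bar F$ is negative enough. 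The decisive ingredient — absent from your sketch — is the \emph{inductive} construction, \emph{before} choosing $D$, of a finite set $T\subset X$ and lines $F_t\subset V_t$ such that a section $s\in H^0(X,\Ascr\otimes\omega_X)$ satisfying $s_t(F_t)\subset F_t\otimes\omega_{X,t}$ for all $t\in T$ must be scalar (this uses $\dim_k H^0(X,\Ascr\otimes\omega_X)<\infty$: if a non-scalar $s$ survives, pick a $t$ and $F_t$ witnessing $s_t(F_t)\not\subset F_t\otimes\omega_{X,t}$, and repeat). Only then is $D$ chosen through $T$ in $n\Pic(X)$ and $\bar\Fscr$ chosen with $\bar\Fscr_t=F_t$; this immediately gives $H^0(X,\Bscr\otimes\omega_X)=H^0(X,\omega_X)$ by checking at the points of $T$. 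Your ordering (fix $D$ first, then take $T\subset D$) forces you to additionally justify that nonzero trace-zero sections do not die on $D$ — this is true for $D$ positive enough, but it is an extra step that you would have to make explicit and that the paper's ordering sidesteps. As written, the proposal stops one genuine idea short of a proof.
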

\begin{proof} Using the reduced trace map we see that $\Ascr'/\Oscr_X$
is self dual. Hence it is sufficient construct $\Ascr'$ in such a way that
\[
H^0(X,\Ascr'/\Oscr_X\otimes_X \omega_X)=0
\]
For an arbitrary line bundle $\Lscr$ on $X$ we will construct $\Ascr'$ in
such a way that
\[
H^0(X,\Ascr'/\Oscr_X\otimes_X \Lscr)=0
\]
or equivalently (since $\Oscr_X$ is a factor of $\Ascr$):
\[
H^0(X,\Lscr)\r H^0(X,\Ascr'\otimes_X\Lscr)
\]
is an isomorphism. 


Assume $s\in H^0(X,\Ascr\otimes_X\Lscr)\setminus H^0(X,\Lscr)$. There
exists $t\in X$ such that $s_t\not\in \Lscr_t$. We have
$\Ascr_t=\End_k(V_t\otimes \Lscr_t)$ for $V_t$ a $n$-dimensional
vector space. Hence there is a one dimensional subspace $F_t\subset
V_t$ such that $s_t(F_t)\not\subset F_t\otimes \Lscr_t$.  If $s'\in
H^0(X,\Ascr\otimes_X\Lscr)\setminus H^0(X,\Lscr)$ is such that
$s'_t(F_t)\subset F_t\otimes \Lscr_t$ then we can find $t'\in X$,
$F_{t'}\subset V_{t'}$ such that $s'_{t'}(F_{t'})\not\subset F_{t'}\otimes
\Lscr_{t'}$.

Continuing we find a finite subset $T\subset X$ and one-dimensional
subspaces $(F_t\subset V_t)_{t\in T}$
\[
 H^0(X,\Lscr)=\{s\in H^0(X,\Ascr\otimes_X \Lscr)\mid \forall t\in T: s_t(F_t)
\not\subset F_t\otimes \Lscr_t\}
\]
Choose an ample line bundle $\Lscr$ on $X$. For $q$ sufficiently big a
generic section of $\Lscr^{\otimes nq}$ vanishing on $T$ will define
a smooth curve $D\subset X$ passing through $T$ (this is a version of
Bertini, see below). Thus we have
$\Lscr^{\otimes nq}\cong \Oscr_X(D)$. In particular the class of $D$ in
$\Pic(X)$ is divisible by $n$. 

By Tsen's theorem there is a vector bundle $\bar{V}$ of rank $n$ on
$D$ such that $\Ascr_D=\HEnd_D(\bar{V})$.  For $t\in T$ we have
now two isomorphisms $\Ascr_t\cong\End(V_t)$, $\Ascr_t\cong \End(\bar{V}_t)$. These
two isomorphisms are related to each other through an isomorphism
$\eta_t:V_t\cong \bar{V}_t$. 

According to Lemma \ref{ref-8.3-16} we may choose a sub line bundle
$\bar{\Fscr}\subset \bar{\Vscr}$ such that for all $t\in T$ we have
$\bar{\Fscr}_t=\eta_t(F_t)$.  Let $\Ascr'$ be the elementary transform
of $\Ascr$ associated to $(D,\bar{\Fscr},\bar{\Vscr})$.

Since $D$ is in $n\Pic(X)$ we have $\cl(\Ascr)=\cl(\Ascr')$ (using
\eqref{ref-7.1-15}). Let $\bar{\Qscr}=\bar{\Vscr}/\bar{\Fscr}$.

We have an exact sequence
\[
0\r \Bscr\otimes_X \Lscr\r \Ascr\otimes_X\Lscr\r \HHom_D(\bar{\Fscr},\bar{\Qscr})\otimes
\Lscr_D\r 0
\]
Checking at the points of $T$ we see that 
\[
H^0(X,\Bscr\otimes_X\Lscr)=H^0(X,\Lscr)
\]
Now we consider the other sequence
\[
0\r \Bscr\otimes_X \Lscr\r \Ascr'\otimes_X \Lscr\r \HHom_D(I_D/I^2_D\otimes
\bar{\Qscr},\bar{\Fscr})\otimes_D \Lscr_D\r 0
\]
where $I_D=\Oscr(-D)$. We deduce that there is an exact sequence
\[
0\r H^0(X,\Lscr)\r H^0(X,\Ascr'\otimes_X \Lscr)\r \Hom_D(I_D/I^2_D\otimes_D
\bar{\Qscr},\bar{\Fscr})\otimes_D \Lscr_D
\]
Now
\[
\Hom_D(I_D/I^2_D\otimes_D
\bar{\Qscr},\bar{\Fscr}\otimes_X \Lscr)\subset 
\Hom_D(I_D/I^2_D\otimes_D \bar{\Vscr},\bar{\Fscr}\otimes_X\Lscr_D)
\]
Choosing $\bar{\Fscr}$ negative enough we get
\[
 \Hom_D(I_D/I^2_D\otimes_D \bar{\Vscr},\bar{\Fscr}\otimes_X \Lscr_D)=0
\]
This finishes the proof.
\end{proof}
We have used the following version Bertini.
\begin{lemma} Let $X/k$ be a smooth projective variety of $\dim X\ge
  2$. Let $\Lscr$ be an ample line bundle on $X$. Let $T$ be a finite
  subset of $X$. Then for $n$ large enough the zeroes of a generic
  section of $\Lscr^{\otimes n}$ which is zero on $T$ will be smooth
  and connected.
\end{lemma}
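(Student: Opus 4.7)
The plan is to apply a suitable Bertini theorem together with Serre vanishing. Let $\Iscr_T \subset \Oscr_X$ denote the ideal sheaf of $T$ and set $V = H^0(X, \Lscr^{\otimes n} \otimes \Iscr_T) \subset H^0(X, \Lscr^{\otimes n})$; its elements are precisely the sections vanishing on $T$. For $n \gg 0$ the standard Serre theorems make $\Lscr^{\otimes n} \otimes \Iscr_T$ globally generated, and the same argument as in Lemma \ref{ref-6.4-14} (applied with $\Fscr = \Iscr_T$) shows that for every $x \in X$ the jet map
\[
H^0(X, \Lscr^{\otimes n} \otimes \Iscr_T) \r \Lscr^{\otimes n} \otimes \Iscr_T /\, m_x^2(\Lscr^{\otimes n} \otimes \Iscr_T)
\]
is surjective.

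For smoothness I would split the analysis into the locus away from $T$ and the finitely many points of $T$. Away from $T$ the twist by $\Iscr_T$ is irrelevant, so $V$ defines a base-point-free subsystem of $|\Lscr^{\otimes n}|$ that separates points and tangent vectors; classical Bertini (\cite[III.10.9]{H}) produces a Zariski-dense open subset $U_1 \subset V$ whose members have zero locus smooth on $X \setminus T$. At each $t \in T$, surjectivity of the second-order jet map combined with $\dim X \ge 2$ shows that the condition ``image in $m_t/m_t^2$ is nonzero'' cuts out a nonempty Zariski-open subset $U_{2,t} \subset V$, and any section in $U_{2,t}$ cuts out a divisor smooth at $t$. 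Intersecting $U_1$ with the $U_{2,t}$ for $t \in T$ gives a nonempty open locus of sections whose zero scheme is smooth everywhere.

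For connectedness I would invoke Serre vanishing again: for $n \gg 0$ we have $H^1(X, \Lscr^{\otimes -n}) = 0$. If $D$ is the smooth zero divisor of the chosen section, the short exact sequence
\[
0 \r \Lscr^{\otimes -n} \r \Oscr_X \r \Oscr_D \r 0
\]
yields an isomorphism $k = H^0(X, \Oscr_X) \xrightarrow{\sim} H^0(D, \Oscr_D)$. Since $D$ is smooth and in particular reduced, $H^0(D, \Oscr_D) = k$ forces $D$ to be connected.

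The only point requiring care is coordinating the two parts: one must pick a single $n$ large enough that both the jet-separation (needed for smoothness, including at $T$) and the vanishing $H^1(X, \Lscr^{\otimes -n}) = 0$ (needed for connectedness) hold simultaneously. Since each is an asymptotic consequence of ampleness of $\Lscr$, this is only bookkeeping rather than a genuine obstacle; no delicate genericity analysis is needed for connectedness, because connectedness is automatic for every smooth member of the system once the cohomological vanishing holds.
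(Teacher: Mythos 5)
Your plan is the same as the paper's in its essentials: establish the second-order jet surjectivity for $H^0(X,\Iscr_T\otimes\Lscr^n)$ via Lemma~\ref{ref-6.4-14}, use a Bertini-type dimension count for smoothness (treating $X\setminus T$ and the finitely many points of $T$ separately), and get connectedness from cohomology vanishing. The paper simply cites Hartshorne III.7.9 for connectedness where you rederive it from the Enriques--Severi--Zariski vanishing $H^1(X,\Lscr^{-n})=0$; that is a fine, somewhat more self-contained route and comes to the same thing.

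Two points deserve correction. First, the appeal to Hartshorne III.10.9 for the smoothness step is misplaced: that theorem is stated for an algebraically closed field of characteristic zero (and for base-point-free systems on projective $X$), while the ambient result must work in any characteristic prime to $n$, and your linear system has base points along $T$ so is only base-point-free on the quasi-projective $X\setminus T$. The fix, which is exactly what the paper does, is not to cite a black-box Bertini but to imitate the proof of II.8.18 directly: the jet surjectivity you already have at each $x\notin T$ lets you run the incidence-variety dimension count, and this argument is characteristic-free. Second, ``Serre vanishing'' by itself does not produce $H^1(X,\Lscr^{-n})=0$, since this is a \emph{negative} twist; you need to combine Serre vanishing on $H^{\dim X -1}(X,\omega_X\otimes\Lscr^n)$ with Serre duality (here is where $\dim X\geq 2$ enters), or just invoke the Enriques--Severi--Zariski lemma (Hartshorne III.7.8), whose corollary III.7.9 is the connectedness statement the paper cites.
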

\begin{proof}
Connectedness follows from \cite[Thm III.7.9]{H}. We prove smoothness
by suitably adapting \cite[Thm II.8.18]{H}.

If $\Mscr$ is a very ample line bundle on $X$ then for all $x\in X$ we
have that
\[
H^0(X,\Mscr)\r H^0(X,\Mscr\otimes \Oscr_X/m^2_x)
\]
is surjective.  This is in fact an equivalence. See \cite[Prop 7.3]{H}. It
follows easily that if $\Escr$ is a coherent sheaf generated by
global sections then
\[
H^0(X,\Escr\otimes_X\Mscr)\r H^0(X,\Escr\otimes\Mscr\otimes \Oscr_X/m^2_x)
\]
is surjective as well. 

It follows from Lemma \ref{ref-6.4-14} that for $n\gg 0$
we have that
\[
H^0(X,\Iscr\otimes_X\Lscr^n)\r H^0(X,\Iscr\otimes\Lscr^n\otimes \Oscr_X/m^2_x)
=H^0(X,(\Iscr\otimes\Lscr^n)/(\Iscr\otimes\Lscr^n)m^2_x)
\]
is surjective for all $x$. Imitating the proof of \cite[Thm
II.8.18]{H} we find that there is a Zariski dense subset $U\subset
H^0(X,\Iscr\otimes_X\Lscr^n)$ such that for $s\in U$ and $x\not\in T$
we have that the zeroes $Z$ of $s$ are smooth in $x$. Naturally $Z$
passes through $T$. It remains to prove that can make $Z$ smooth in
$T$ as well. Let $t\in T$. Then $Z$ is not smooth in $t$ if $s_t\in
H^0(X,\Lscr^n\otimes m^2_t)=H^0(X,(\Iscr\otimes\Lscr^n)m_t)$. Hence by
taking $s$ in the complement of a suitable set of linear spaces $Z$ is
smooth in $t$ as well. We are done.
\end{proof}

\section{General lifting}
\begin{proposition}
\label{ref-9.1-18}
Assume that we have a proper flat map of finite type $\phi:W\rightarrow C$ with $C/k$ of finite type as well. 
Let $x\in C$ and $X=\phi^{-1}(x)$. Let $\Ascr_0\in \Az_n(X)$ be such
that 
\[
H^2(X,\Ascr_0/\Oscr_X)=0
\]
 Then there
exists a diagram
\[
\begin{CD}
X=\phi^{\prime-1}(x)=\phi^{-1}(x)@>>> W'=C'\times_C W @>>> W\\
@VVV  @V\phi' VV @VV\phi V\\
x @>>> C' @>>\text{etale}> C
\end{CD}
\]
together with $\Ascr'\in \Az_n(W')$ such that 
$\Ascr'_{\phi^{\prime-1}(x)}=\Ascr_0$.
\end{proposition}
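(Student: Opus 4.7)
The plan is to combine formal deformation theory along the fiber $X$ with Grothendieck's existence theorem and Artin approximation. After shrinking $C$ I would assume that $x$ is a closed point, so that $k(x)=k$.

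\textbf{Step 1: Formal lifting.} Write $R=\Oscr_{C,x}$ with maximal ideal $\mathfrak{m}$, put $R_j = R/\mathfrak{m}^{j+1}$, and set $W_j = W\times_C\Spec R_j$, so that $W_0=X$. Starting from $\Ascr_0=\Ascr^{(0)}$, I would inductively construct $\Ascr^{(j)}\in\Az_n(W_j)$ restricting to $\Ascr^{(j-1)}$ on $W_{j-1}$. For $j\ge 1$ the ideal of $W_{j-1}\subset W_j$ has square zero, and by flatness of $\phi$ it is isomorphic, as a sheaf on $X$, to $(\mathfrak{m}^j/\mathfrak{m}^{j+1})\otimes_k \Oscr_X$. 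Since $\mathfrak{m}^j/\mathfrak{m}^{j+1}$ is a finite-dimensional $k$-vector space, the obstruction provided by the lifting lemma of Section~8 lies in
\[
H^2(X,\,(\mathfrak{m}^j/\mathfrak{m}^{j+1})\otimes_k(\Ascr_0/\Oscr_X))\;\cong\;(\mathfrak{m}^j/\mathfrak{m}^{j+1})\otimes_k H^2(X,\Ascr_0/\Oscr_X),
\]
which vanishes by hypothesis. Hence each lift exists and we obtain a compatible family $\{\Ascr^{(j)}\}_{j\ge 0}$.

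\textbf{Step 2: Algebraization.} Let $\hat R$ denote the $\mathfrak{m}$-adic completion of $R$ and $\hat W = W\times_C\Spec\hat R$. The family $\{\Ascr^{(j)}\}$ is a formal coherent sheaf on $\hat W$. Because $\phi$ is proper, Grothendieck's existence theorem (formal GAGA) produces an honest coherent sheaf $\hat{\Ascr}$ on $\hat W$ with $\hat{\Ascr}|_{W_j}\cong\Ascr^{(j)}$ for every $j$. The compatible multiplication maps algebraize by the same theorem, endowing $\hat{\Ascr}$ with an $\Oscr_{\hat W}$-algebra structure; and $\hat{\Ascr}$ is Azumaya of degree $n$ because being Azumaya is an open condition and it holds on the closed fiber $X$.

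\textbf{Step 3: Descent to an etale neighborhood.} Consider the functor $F$ on pointed etale neighborhoods $(C'\to C,\, x'\mapsto x)$ given by
\[
F(C') = \{\,(\Ascr',\iota) : \Ascr'\in\Az_n(W\times_C C'),\; \iota\colon\Ascr'|_X\xrightarrow{\sim}\Ascr_0\,\}/\!\!\sim.
\]
Since $\phi$ is of finite type over the Noetherian base $C$, an Azumaya algebra together with the marked isomorphism $\iota$ is encoded by finitely presented data, so $F$ is locally of finite presentation. The pair $(\hat{\Ascr},\mathrm{id})$ defines a point of $F$ over $\Spec\hat R$. By Artin's approximation theorem this formal point is approximated by a point of $F$ over the henselization $\Oscr_{C,x}^{h}$, and a point over the henselization comes by a standard limit argument from a point over some actual etale neighborhood $C'\to C$ of $x$. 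That point is the desired $\Ascr'\in\Az_n(W')$, with $\Ascr'|_X\cong\Ascr_0$.

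The main obstacle is Step 3: the infinitesimal lifting is an immediate application of the obstruction lemma once one recognizes the graded pieces of $\mathfrak{m}$ as finite-dimensional $k$-vector spaces, and the algebraization is formal GAGA; but passing from a formal lift to an algebraic lift over an etale neighborhood genuinely requires Artin approximation and the verification that the moduli functor of Azumaya algebras with specified restriction to $X$ is limit-preserving.
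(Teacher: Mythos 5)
Your proposal follows the same three-stage strategy as the paper's proof: infinitesimal lifting using the square-zero obstruction lemma of Section~8 together with flatness to identify the ideal sheaves of the successive thickenings, algebraization of the resulting formal object via Grothendieck's existence theorem, and descent from the completion to an actual \'etale neighborhood via Artin approximation applied to the limit-preserving functor of Azumaya algebras on $W\times_C(-)$. The only cosmetic difference is that you build the marked isomorphism $\iota$ to $\Ascr_0$ into the moduli functor, whereas the paper works with the bare functor $D\mapsto\Az_n(D\times_C W)$ and extracts $\Ascr'|_X\cong\Ascr_0$ from the "$\Ascr'\equiv\tilde\Ascr\bmod m_x$" conclusion of approximation; both give the same result.
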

\begin{proof}
Let $m_x\subset \Oscr_C$ be the defining ideal for $x\in C$. 
The defining ideal of $X_0\overset{\text{def}}{=} X\subset W$ is 
\[
I\overset{\text{def}}{=}m_x\Oscr_W\overset{\text{flatness}}{\cong}
m_x\otimes_{\Oscr_C}\Oscr_W 
\]
Let $X_n$ be defined by $I^{n-1}\cong m^{n-1}_x\otimes_C\Oscr_W$. Then
$X_{n+1}$ inside $X_n$ is defined by 
\[
I^{n-1}/I^n\cong m^{n-1}_x/m^n_x \otimes_C\Oscr_W\cong (\Oscr_x)^{\oplus m_n}\otimes_C\Oscr_W\cong \Oscr_{X_0}^{\oplus m_n}
\]
Hence the obstruction of lifting an $\Ascr_n\in \Az(X_n)$ to an $\Ascr_{n+1}
\in \Az(X_{n+1})$ lies in 
\[
H^2(X_0, \Oscr_{X_0}^{\oplus m_n}\otimes_{X_0} \Ascr_0/\Oscr_{X_0})=0
\]
We may construct liftings 
\[
\cdots \rightarrow \Ascr_m\rightarrow \cdots \rightarrow\Ascr_2\rightarrow 
\Ascr_1\rightarrow \Ascr_0
\]
where $\Ascr_m$ lives in $\Az_n(X_m)$. Put

\vspace*{1mm}

\centerline{$%
\hat{\Ascr}=\invlim_n \Ascr
$}

\vspace*{1mm}

$\hat{\Ascr}$ is an Azumaya algebra over the formal completion
$\widehat{X}$ of $W$ at $X$. Recall that the formal completion of $X$
in $W$ is the ringed space$(X,\widehat{\Oscr}_{W,X})$.  This is not a
scheme but a so-called \emph{formal scheme} \cite{H}.

By Grothendieck's existence theorem \cite[5.1.4]{EGAIII} there is an
equivalence $\coh(\Spec \widehat{\Oscr}_{C,x}\times_C W)\cong
\coh(\widehat{X})$. Note that $\Spec \widehat{\Oscr}_{C,x}\times_C W$
is an actual scheme.

 From the Grothendieck existence theorem we  deduce easily $ \Az_n(\Spec
\widehat{\Oscr}_{C,x}\times_C W)=\Az_n(\widehat{X}) $. Let
$\tilde{\Ascr}\in \Az_n(\Spec \widehat{\Oscr}_{C,x}\times_C W)$
correspond to $\hat{\Ascr}$.

The functor
\[
\Sch/C:D\mapsto \Az_n(D\times_C W)
\]
is locally finitely presented in the sense that its restriction to
affine $k$-schemes commutes with filtered direct limits. It follows
from the Artin approximation theorem \cite[Cor (2.2)]{ArtinApprox}
that there exists $x\rightarrow C'\xrightarrow{\text{etale}} C$ and
$\Ascr'\in \Az_n(C'\times_C W)$ such that formally
\[
`` \Ascr'\equiv \tilde\Ascr \mod m_x''
\]
which means  
\[
\Ascr'\otimes_{\Oscr_{C'}}\Oscr_x\cong \tilde{\Ascr}\otimes_{\hat{\Oscr}_{C,x}}
\Oscr_x\cong \Ascr_0
\]
finishing the proof. 
\end{proof}
\section{Creating a family}
We prove the following result, 
\begin{proposition}
\label{ref-10.1-19} Let $X/k$ a smooth, proper, connected surface and let
$\alpha\in\Br(X)_n$. Then there exists
a smooth connected $k$-variety $W$ of dimension 3 and morphisms
\[
\begin{CD}
W@>g>> X\\
@VfVV\\
\AA^1
\end{CD}
\]
such that 
\begin{enumerate}
\item $f$ is smooth.
\item The generic fiber of $f$ is geometrically connected. 
\item $f$ is proper over a neighborhood of $0$.
\item $Y\overset{\text{def}}{=} W_0=f^{-1}(0)$ splits $\alpha$. 
\item $W_1=f^{-1}(1)\neq \emptyset$ and $g:W_1\r X$ is an open immersion.
\end{enumerate}
\end{proposition}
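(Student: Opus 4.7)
I intend to exhibit $W$ as a one-parameter deformation of an Artin splitter of $\alpha$, degenerating at $\lambda = 1$ to (an open subset of) $X$ itself. First, since $\alpha \in \Br(X)$ and $X$ is smooth projective of dimension two, Lemma \ref{ref-5.8-11} provides an Azumaya algebra $\Ascr \in \Az_n(X)$ representing $\alpha$ (chosen generically a division algebra). Applying Theorem \ref{ref-6.1-12} (Artin splitting) to $\Ascr$ produces an ample line bundle $\Mscr$, an integer $r \gg 0$, the line bundle $\Lscr = \Mscr^r$, and a nonempty Zariski open $U \subset H^0(X, \Ascr \otimes_{\Oscr_X} \Lscr)$ such that for every $s \in U$ the finite cover $Y_s \to X$ is smooth, connected, generically etale, and splits $\Ascr$ (hence splits $\alpha$).

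Fix $s_0 \in U$ and a general section $\tau \in H^0(X, \Lscr)$, regarded as a scalar element of $H^0(X, \Ascr \otimes \Lscr)$ via $\tau \mapsto \tau \cdot 1$. Consider the pencil $s_\lambda = (1-\lambda) s_0 + \lambda \cdot \tau \cdot 1$ for $\lambda \in \AA^1$, and form the total space $\widetilde{W} \subset L \times \AA^1$ (where $L = \underline{\Spec} \bigoplus_n \Lscr^{-n} T^n$) cut out by the reduced characteristic polynomial $P_{s_\lambda}(T) = 0$. Take $g$ to be projection to $X$ and $f$ to be projection to $\AA^1$; then the fiber over $\lambda$ is $Y_{s_\lambda}$. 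For $\lambda = 0$ this is the splitter $Y_{s_0}$, and for $\lambda$ in the Zariski open of $\AA^1$ where $s_\lambda \in U$ (a nonempty open containing $0$ provided the pencil is generically chosen) the fiber is a smooth connected Artin splitter. This already yields $f$ smooth and proper over a neighborhood of $0$ with geometrically connected generic fiber, and $W_0$ splits $\alpha$.

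The main obstacle is the fiber at $\lambda = 1$: since $s_1 = \tau \cdot 1$ is scalar, $P_{s_1}(T) = (T-\tau)^d$, so $\widetilde{W}_1$ is the graph $\Gamma_\tau$ with multiplicity $d$ rather than reduced. To repair this I pass to the projective compactification $\bar L = \PP(\Oscr_X \oplus \Lscr^{-1})$ of $L$; at $\lambda = 1$ the ``missing'' $d-1$ sheets escape to the section at infinity $\Sigma_\infty \subset \bar L$. Define $W$ to be the open subvariety of the closure $\bar{\widetilde W}$ obtained by deleting the section-at-infinity component of $\bar{\widetilde W}_1$, together with the locus where $\tau$ vanishes (so that $\Gamma_\tau$ restricts to an actual graph over $X \setminus V(\tau)$). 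After this surgery $W_1 = \Gamma_\tau|_{X \setminus V(\tau)}$ maps under $g$ by an open immersion into $X$.

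Smoothness of $W$ and of $f$ is then verified by a Bertini-type argument in the spirit of Theorem \ref{ref-6.1-12}: combining Lemma \ref{ref-6.3-13} (the codimension-three bad locus $W_x \subset \Ascr/m_x^2 \Ascr$) with Lemma \ref{ref-6.4-14} (surjectivity of $H^0(X,\Ascr \otimes \Lscr) \to H^0(X,\Ascr/m_x^2\Ascr \otimes \Lscr)$ for $r \gg 0$), choosing the pencil generically so that $s_\lambda \bmod m_x^2$ avoids $W_x$ for all $x \in X$ and all $\lambda$ in a Zariski open of $\AA^1$. The hardest part will be verifying that the surgery at $\lambda = 1$ preserves smoothness of $W$ and of $f$ and genuinely realizes $g|_{W_1}$ as an open immersion; this requires a careful analysis of how $\Gamma_\tau$ meets $\Sigma_\infty$ inside $\bar L$ at the points of $V(\tau)$, and is the place where the general-position hypotheses on the pencil and on $\tau$ are really used.
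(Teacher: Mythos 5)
Your overall plan — interpolate a splitter $Y_{s_0}$ down to (a copy of) $X$ inside the geometric line bundle $L$ over $X \times \AA^1$ — is the right idea, and your first two steps (Lemma \ref{ref-5.8-11}, Theorem \ref{ref-6.1-12}) match the paper. But the specific pencil you propose has a structural problem at $\lambda = 1$ that your proposed surgery cannot repair, and in fact the paper's proof makes a crucially different choice precisely to avoid it.

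You interpolate \emph{sections of the Azumaya algebra}, $s_\lambda = (1-\lambda)s_0 + \lambda\,\tau\cdot 1$, and then take characteristic polynomials. At $\lambda = 1$ the section is scalar, so $P_{s_1}(T) = (T-\tau)^d$ and the fiber $\widetilde W_1$ is the $d$-th order infinitesimal thickening of $\Gamma_\tau$ in $L$. You claim the missing $d-1$ sheets ``escape to the section at infinity'' in $\PP(\Oscr_X \oplus \Lscr^{-1})$, but this does not happen: $P_{s_\lambda}(T)$ is monic of degree $d$ in $T$ for every $\lambda$, so $\widetilde W \to X \times \AA^1$ is everywhere finite flat of degree $d$ and stays entirely inside the affine bundle $L$; homogenizing $(T-\tau S)^d = 0$ has no component on $S = 0$. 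The sheets do not go to infinity — they collapse onto one another, producing a fiber that is \emph{everywhere} non-reduced. No open surgery (deleting a section at infinity, or anything else) can excise nilpotents that are supported on all of $\Gamma_\tau$; you would have to take a normalization or reduced subscheme, which would destroy flatness of $f$ and hence the smoothness conclusion. And this is unavoidable with your pencil: any scalar element of an Azumaya algebra of degree $d$ has reduced characteristic polynomial $(T - \tau)^d$, so there is no choice of $\tau$ that yields a reduced fiber at the endpoint.

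The paper avoids this by interpolating the \emph{polynomials} rather than the sections. It sets $P(T) = P_{s}(T)$, picks $d$ \emph{distinct} global sections $w_1, \dots, w_d$ of $\Lscr$, puts $Q(T) = \prod_i (T - w_i)$, and uses the pencil $R(t,T) = (1-t)P(T) + tQ(T)$. Note that $Q(T)$ is \emph{not} the characteristic polynomial of any single section of $\Ascr \otimes \Lscr$; one is free to choose it because the construction of $Y_s$ only needs a monic degree-$d$ polynomial, not one coming from the Cayley--Hamilton construction. The fiber over $t = 1$ is then the reduced union $X_1 \cup \cdots \cup X_d$ of the graphs $\Gamma_{w_i}$, each isomorphic to $X$, meeting in codimension one where $w_i = w_j$. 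Removing $X_2, \dots, X_d$ and then passing to the smooth locus of the resulting family over a neighborhood of $\{0,1\}$ gives the desired $W$ with $g|_{W_1}$ an open immersion; geometric connectedness of the generic fiber follows from Stein factorization plus $\Gamma(W', \Oscr_{W'}) = k[t]$. You should replace your pencil in $\Ascr \otimes \Lscr$ by this pencil of polynomials; the rest of your plan then goes through along the lines the paper indicates.
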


\begin{proof}[Proof of Proposition \ref{ref-10.1-19}]  We know that $\alpha_{k(X)}$ is represented
  by a division algebra.  Hence by lemma \ref{ref-5.8-11} we know
  there is an Azumaya algebra $\Ascr\in \Az_m(X)$ on $X$ representing
  $\alpha$ which is generically a division algebra.  We choose a
  line bundle $\Lscr$ and a section $s\in \Ascr\otimes_X \Lscr$ so that
  $Y_s$ is smooth and splits $\Ascr$.  Put
\[
\Bscr=\oplus_n \Lscr^{-n} T^n
\]
Then $Y_s$ is defined by the locally principal $\Bscr$ ideal generated
by $P(T)\Lscr^{-m}$ where $P(T)$ is the reduced characteristic polynomial
of $s$. 

Choose $w_1,\ldots,w_m$ distinct global sections of $\Lscr$ and put
\[
Q(T)=(T-w_1)\cdots (T-w_m)
\]
and
\[
R(t,T)=(1-t)P(T)+t Q(T)
\]
We view $R(t,T)$ as a section of 
\[
\Bscr[t]\otimes_X \Lscr^m
\]
We let $W'\subset L\times \AA^1$ be defined by the locally principal
$\Bscr[t]$ ideal generated by $R(t,T)\Lscr^{-m}$.  Thus we have
the following diagram
\[
\begin{CD}
W' @>\pr_1>> L @>>> X\\
@V \pr_2 VV\\
\AA^1
\end{CD}
\]
We denote the composition of the horizontal arrows by $g'$ and the 
vertical arrow by $f'$. 
We check
the following things.
\begin{enumerate}
\item
  $W'\r X\times \AA^1$ is finite flat. To see this we note that
  $R(t,T)$ is of the form $T^m+\text{lower terms in $T$}$. Thus
  $\Bscr[t]/(R(t,T)\Lscr^{-m})$ is locally free of rank $m$ over $X\times
\AA^1$.  For use below we record $W'=\underline{\Spec}\, \Cscr$ where
\[
\Cscr=\left(\bigoplus_{i=0}^m \Lscr^{-i}T^i)\right) [t]
\]
It follows in particular that $f':W'\r \AA^1$ is proper and flat. 
\item The fiber at $0$ of $f':W'\r \AA^1$  is equal to $Y_s$. This is
clear since this fiber is defined by $P(T)$.
\item The fiber at $1$ of $f':W'\r \AA^1$  is defined by $Q(T)$ and
can be written as $X=X_1\cup\cdots\cup X_m$ where $g'$ restricted
to each $X_i$ defines an isomorphism $X_i\cong X$. The singular
points of $(f')^{-1}(1)$ occur at the intersections $X_i\cap X_j$. This
happens at the points of $X$ where the sections $w_i$, $w_j$ are equal. 
\end{enumerate}
Now put
\[
W''=W'-X_2\cup\cdots \cup X_m
\]
Then the fibers of $f'|W''$ are smooth at $0,1$.  Since smoothness is
an open condition on flat maps \cite[Thm 17.5.1]{EGAIV} there is a
neighborhood $U$ of $0,1$ such that $(f')^{-1}(U)\r U$ is smooth. And
of course $(f')^{-1}(U-1)\r (U-1)$ is still proper. We let $W$ be the
smooth locus of $f'\mid W''$. Then $W$ contains $(f')^{-1}(U)$. We let
$f,g$ be the restrictions of $f',g'$ to $W$.

\medskip

The only property that remains to be proved is that the generic fiber
of $f$ is geometrically connected. It is clear that $f$ and $f'$ have
the same generic fiber hence we consider $f'$ which is proper.

Consider the Stein factorization
\cite{H} for proper morphisms.
\[
W'\xrightarrow{p} \Spec \Gamma(W',\Oscr_{W'})\xrightarrow{q} \AA^1
\]
Here $p$ has geometrically connected fibers \cite[Remarque 4.3.4]{EGAIII} and
$q$ is finite. We need to show that $q$ is an isomorphism. I.e. that
$\Gamma(W',\Oscr_{W'})=k[t]$. This follows from the fact that
\[
\Gamma(W',\Oscr_{W'})=\Gamma(X,\Cscr)=k[t]\qed
\]
\def\qed{}\end{proof}
\section{The unramified case}
\begin{theorem} Let $X$ be a projective smooth connected surface 
over algebraically closed field. 
 Assume that $\alpha\in \Br(X)\subset \Br(k(X))$
has period $n$. Then the index of $\alpha$ is $n$.
\end{theorem}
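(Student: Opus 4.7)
The overall strategy is to assemble the tools of the previous sections into a deformation argument: starting from a degree-$n$ Azumaya algebra on the splitting fibre $Y$, I will lift it across a family $W\to\AA^1$ whose other fibre realises an open of $X$, and then invoke Theorem~\ref{ref-3.1-0}.

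\emph{Setting up the family.} By Lemma~\ref{ref-5.8-11} we may represent $\alpha$ by an Azumaya algebra $\Ascr$ on $X$ of degree $m=\index(\alpha)$, generically a division algebra. Applying Artin splitting (Theorem~\ref{ref-6.1-12}) to $\Ascr$ produces a smooth connected splitting surface $Y$, and Proposition~\ref{ref-10.1-19} then yields a smooth three-fold $W$ with $f\colon W\to\AA^1$ such that $W_0=Y$, $f$ is proper over a neighbourhood of $0$, and $g\colon W_1\hookrightarrow X$ is an open immersion.

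\emph{Lifting a degree-$n$ representative.} Since $(g^*\alpha)|_Y=0$, we take $\Bscr_Y=M_n(\Oscr_Y)$, a degree-$n$ Azumaya algebra of trivial Brauer class on $Y$. Theorem~\ref{ref-8.4-17} replaces $\Bscr_Y$ by $\Bscr_Y'\in\Az_n(Y)$ in the same (trivial) Brauer class with $H^2(Y,\Bscr_Y'/\Oscr_Y)=0$. Proposition~\ref{ref-9.1-18} then furnishes an \'etale neighbourhood $C'\to\AA^1$ of $0$ and $\Bscr'\in\Az_n(W')$, where $W'=C'\times_{\AA^1}W$, restricting to $\Bscr_Y'$ on $Y$.

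\emph{Matching the Brauer class.} The main obstacle is to force the identity $[\Bscr']=g^*\alpha|_{W'}$ in $\Br(W')_n$, not merely on $Y$ where both classes vanish. I would consider the degree-$nm$ Azumaya algebra $\Dscr=\Bscr'\otimes_{\Oscr_{W'}}(g^*\Ascr)^{\mathrm{op}}|_{W'}$, which is split on $Y$ since both tensor factors are. Because $f$ is proper in a neighbourhood of $0$, proper base change for \'etale cohomology yields an isomorphism $\Br(\widehat W)\cong\Br(Y)$ for the formal completion $\widehat W$ of $W$ along $Y$, forcing $\Dscr|_{\widehat W}$ to be Brauer-trivial. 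Grothendieck's existence theorem then identifies $\Dscr|_{\widehat W}$ with $\End$ of a rank-$nm$ vector bundle, and Artin approximation extends this identification to an \'etale neighbourhood of $Y$ in $W'$; consequently $\Bscr'$ and $g^*\Ascr$ become Morita equivalent there. Combined with injectivity $\Br(W')\hookrightarrow\Br(k(W'))$ for the smooth $W'$, this propagates to an equality $[\Bscr']=g^*\alpha|_{W'}$ of Brauer classes on $W'$.

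\emph{Conclusion.} Rechoose the point $t_0\in\AA^1$ playing the role of $1$ in Proposition~\ref{ref-10.1-19} so that it lies in the image of $C'\to\AA^1$ (the image is a Zariski open of $\AA^1$ containing $0$, so such $t_0$ exists), and select a $k$-rational point $t_0'\in C'$ above it; then $W'_{t_0'}\cong W_{t_0}$ is an open of $X$, and $\Bscr'|_{W'_{t_0'}}$ is a degree-$n$ Azumaya algebra on this open in the class $\alpha|_{W_{t_0}}$. The local ring of $W$ at the generic point of $W_{t_0}$ is a regular DVR with quotient field $k(W)$ and residue field $k(X)$, so Theorem~\ref{ref-3.1-0} gives $\index(\alpha)=\index_{k(W)}(g^*\alpha)\le n$; combined with $n=\period(\alpha)\mid\index(\alpha)$, this yields $\index(\alpha)=n$.
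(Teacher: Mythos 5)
The overall skeleton (family via Proposition~\ref{ref-10.1-19}, kill obstructions via Theorem~\ref{ref-8.4-17}, lift via Proposition~\ref{ref-9.1-18}, specialize via Theorem~\ref{ref-3.1-0}) matches the paper. But there is a genuine gap in the step you call ``Matching the Brauer class,'' and it is precisely the step the paper is most careful about.

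The problem begins with your choice $\Bscr_Y=M_n(\Oscr_Y)$. The class $\eta_Y\in H^2(Y,\mu_n)$ (for a fixed lift $\eta$ of $\alpha$) need not vanish just because $\alpha_Y=0$: vanishing in $\Br(Y)_n$ only says $\eta_Y$ lies in the image of $\Pic(Y)/n$. So $\cl(M_n(\Oscr_Y))=0$ will in general differ from $\eta_Y$ in $H^2(Y,\mu_n)$, even though both have the same (trivial) Brauer class. You then try to repair this at the level of Brauer groups, asserting that proper base change gives $\Br(\widehat W)\cong\Br(Y)$ and hence that $\Dscr|_{\widehat W}$ is split. Neither step is correct as stated. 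Proper base change (and the invariance used to compare the special fibre with the formal completion) applies to torsion coefficients such as $\mu_n$, not to $\GG_m$ or to $\Br$; the map $\Br(X_{k+1})_n\to \Br(X_k)_n$ along an infinitesimal thickening $X_k\hookrightarrow X_{k+1}$ has a potential kernel controlled by $H^2(Y,\Oscr_Y)$, via the exact sequence relating $\Pic$ of successive thickenings. Equivalently, knowing $\Dscr|_Y=\uEnd(E)$ does not let you lift the splitting order by order: the obstruction to lifting $E$ compatibly with the given Azumaya lift lives in $H^2(Y,\uEnd(E)\otimes I_k/I_{k+1})$, and nothing forces it to vanish. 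So ``split on $Y$'' does not propagate to $\widehat W$, and the subsequent invocations of Grothendieck existence and Artin approximation have nothing to apply to.

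The paper sidesteps all of this by working with the finer invariant $\cl(-)\in H^2(-,\mu_n)$ rather than the Brauer class, and by arranging the \emph{$\mu_n$-class} on $Y$ to be exactly $\eta_Y$ from the start: since $\alpha_Y=0$, one can choose $\Lscr\in\Pic(Y)$ mapping to $-\eta_Y$ and set $\Bscr_0=\HEnd(\Lscr\oplus\Oscr_Y^{n-1})$, so that $\cl(\Bscr_0)=\eta_Y$ by Lemma~\ref{ref-4.2-2}. Elementary transforms based on curves in $n\Pic$ preserve $\cl$, and then proper base change for $\mu_n$ together with $(R^2f'_\ast\mu_n)_0=\varinjlim H^2(C''\times_{C'}W',\mu_n)$ does legitimately give $\cl(\Ascr)=\eta_{W'}$ after an \'etale shrink, from which $[\Ascr]=\alpha_{W'}$ follows by applying $[-]$. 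Your tensor-with-$(g^*\Ascr)^{\mathrm{op}}$ idea could in principle be rephrased in terms of $\cl(\Dscr)\in H^2(-,\mu_{nm})$, but then $\cl(\Dscr)|_Y=-[\det(W_Y\otimes V)]$ is not zero in general, and one is led back to introducing a twist by an appropriate line bundle --- i.e., to the paper's choice.

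Two smaller points in the conclusion: the index bound you obtain is for $\alpha$ over $k(W')$, not $k(W)$ (these differ unless $C'\to\AA^1$ is birational), so the discrete valuation must live in $k(W')$; and rather than ``rechoosing $t_0$'' (which would require re-proving Proposition~\ref{ref-10.1-19} for a different marked point), the paper extends $C'\to\AA^1$ to a finite map of smooth curves $C\to\AA^1$, so that the fibre over $1$ is guaranteed to have a preimage in the regular model $C\times_{\AA^1}W$.
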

\begin{proof} We construct
\[
\begin{CD}
Y@>>>W@>g>> X\\
@VVV @VfVV\\
0@>>>\AA^1
\end{CD}
\]
as in Proposition \ref{ref-10.1-19}.

Choose $\eta\in H^2(X,\mu_n)$ whose image is $\alpha$. From the commutative
diagram
\[
\begin{CD}
0 @>>> \Pic(Y)/n\Pic(Y)@>>> H^2(Y,\mu_n) @>>> \Br(Y)_n @>>> 0\\
@. @AAA @AAA @AAA\\
0 @>>> \Pic(X)/n\Pic(X)@>>> H^2(X,\mu_n) @>>> \Br(X)_n @>>> 0
\end{CD}
\]
we see that we can choose $\Lscr\in \Pic(Y)$ such that the image
of $\Lscr$ in $H^2(Y,\mu_n)$ is $-\eta_Y$.  
Consider the following
Azumaya algebra on $Y$
\[
\Bscr_0=\HEnd(\Lscr_Y\oplus \Oscr_Y^{n-1})
\]
By Lemma \ref{ref-4.2-2} we find $\cl(\Bscr_0)=\eta_Y$. Using Lemma
\ref{ref-8.4-17} we may find an elementary transform
$\Ascr_0$ of $\Bscr_0$ such that $H^2(Y,\Ascr_0/\Oscr_Y)=0$ and such
that $\cl(\Ascr_0)=\cl(\Bscr_0)=\eta_Y$. According to Theorem
\ref{ref-9.1-18} there exists an etale neighborhood $C'\r \AA^1$ of $0$
such that $f':W'=C'\times_{\AA^1}W\r C'$ is proper and smooth
and an Azumaya algebra $\Ascr$ on $W'=C'\times_{\AA^1}W$ such that
$\Ascr_Y=\Ascr_0$.
Since $\cl(-)$ is compatible with base change we have $\cl(\Ascr)_Y=\cl(\Ascr_Y)
=\cl(\Ascr_0)=\eta_Y$.

We have $\cl(\Ascr)\in H^2(W',\mu_n)$. According to \cite[Prop 1.13]{Milne}  we
have
\[
(R^2f'_\ast \mu_n)_0=\dirlim H^2(C''\times_{C'} W',\mu_n)
\]
where $C''$ runs through all etale neighborhoods of $0\in C'$. By proper
base change \cite[IV-1]{arcata} we have
\[
(R^2f'_\ast \mu_n)_0=H^2(Y,\mu_n)
\]
Combining these two results we get
\[
H^2(Y,\mu_n)=\dirlim H^2(C''\times_{C'} W',\mu_n)
\]
Now $\cl(\Ascr)$ and $\eta_{W'}$ have the same image in $H^2(Y,\mu_n)$. It
follows there is some etale neighborhood $C''\r C'$ of $0\in C'$ such
that $\cl(\Ascr)_{W''}=\eta_{W''}$ where $W''=C''\times_{C'} W'$. We replace
$C''$ by $C'$, $W'$ by $W''$, $\Ascr$ by $\Ascr_{W''}$. We have now arrived at the situation where
$\cl(\Ascr)=\eta_{W'}$.  Going to Brauer groups we find
\[
[\Ascr]=[\cl(\Ascr)]=[\eta_{W'}]=[\eta]_{W'}=\alpha_{W'}
\]
Thus we have shown that the image of $\alpha\in \Br(k(X))$ in
$\Br(k(W'))$ has index $n$. By Theorem \ref{ref-3.1-0} it is now
sufficient to construct a discrete valuation ring in $k(W')$ with
residue field $k(X)$.

We proceed as follows. We may extend $C'\r \AA^1$ to finite morphism
of smooth (affine) curves $C\r \AA^1$. Then $\tilde{W}=C\times_{\AA^1} W$
is smooth over $C$ and hence regular. It contains $W'$ as an open
subset and hence $k(W')=k(\tilde{W})$. 

Since $k$ is algebraically closed there exists a point $c\in C$ lying
above $1\in \AA^1$. Then $c\times_{\AA^1} W$ is a divisor $D$ in
$\tilde{W}$ which is isomorphic to $0\times_{\AA^2} W$ which is an
open subset of $X$.  The discrete valuation ring in $k(\tilde{W})$
defined by $D$ has residue field $k(X)$ which is what we want.
\end{proof}
\section{The ramified case}
\begin{theorem} Let $k$ be an algebraically closed field. Assume that
$X/k$ is a smooth projective connected surface. Let $\alpha\in \Br(k(X))$
be of period $n$, prime to the characteristic of $k$. Then the 
index of $\alpha$ is $n$.
\end{theorem}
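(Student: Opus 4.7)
The plan is to reduce the ramified case to the unramified case of the previous section by constructing a cyclic $n$-fold cover $\tilde Y \to X$ that both absorbs the ramification of $\alpha$ and splits the resulting Brauer class.

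By successive blow-ups of $X$ (which change neither $k(X)$ nor $\alpha$, since $\Br$ is a birational invariant for smooth surfaces) I would first arrange that the ramification divisor $D=\sum C_i$ of $\alpha$ is a reduced normal-crossings divisor with smooth components. Next, using a Bertini-type argument in the spirit of the lemma following Theorem~\ref{ref-8.4-17}, I would choose an ample line bundle $\Mscr$ on $X$, an integer $r\gg 0$, a line bundle $\Lscr$, and a section $s\in H^0(X,\Lscr^n)$ whose zero divisor $H$ contains $D$ and is a normal-crossings divisor with smooth components, with $[H]=n[\Lscr]\in \Pic(X)$. This provides the cyclic $n$-fold cover of Proposition~\ref{ref-5.7-10},
\[
Y \;=\; \underline{\Spec}\Bigl(\bigoplus_{i=0}^{n-1}\Lscr^{-i}T^i\Bigr),\qquad T^n=s,
\]
and I would let $\tilde Y\to Y$ be its minimal resolution of singularities, which is a smooth connected projective surface over $k$.

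By Proposition~\ref{ref-5.7-10}, the pullback $\alpha_{k(\tilde Y)}$ is unramified on $\tilde Y$, hence by Lemma~\ref{ref-5.8-11} it is represented by an Azumaya algebra on $\tilde Y$. Applying the unramified case of the theorem (established in the previous section) to $\tilde Y$ then gives
\[
\index\bigl(\alpha_{k(\tilde Y)}\bigr) \;=\; \period\bigl(\alpha_{k(\tilde Y)}\bigr),
\]
and this period divides $n$.

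The main obstacle is to refine the choice of $\Lscr$, $s$, and the auxiliary part $H-D$ of the branch divisor so that in fact $\alpha_{k(\tilde Y)}=0$. Once this is achieved, $k(\tilde Y)/k(X)$ is a degree-$n$ splitting field for $\alpha$, so $\index(\alpha)\mid n$; combined with $n=\period(\alpha)\mid \index(\alpha)$ one concludes $\index(\alpha)=n$, as desired. Forcing the vanishing of $\alpha_{k(\tilde Y)}$ is the delicate geometric content of the argument: locally at each $C_i$, cyclic $n$-branching already annihilates the ramification character $\Ram_{C_i}(\alpha)$ appearing in the coniveau complex~\eqref{ref-5.3-6}, and one must choose the auxiliary divisor $H-D$ so that the Kummer class of $s$ in $H^1(k(X),\mu_n)$ pairs with the presentation of $\alpha$ (as a sum of symbols over $k(X)$) in such a way that every symbol becomes trivial in $\Br(k(Y))$. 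This pairing/trivialization step is the ``extremely ingenious reduction'' alluded to in the introduction, and it is where the real difficulty of the ramified case resides.
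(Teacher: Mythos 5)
Your opening moves follow the paper: reduce to prime period $l$, resolve so the ramification divisor $D$ has normal crossings, choose $\Lscr$ and $s\in H^0(X,\Lscr^l)$ with zero divisor $H\supset D$ a smooth normal-crossings divisor, form the cyclic cover $Y$, and invoke Proposition~\ref{ref-5.7-10} to see that $\alpha_{k(\tilde Y)}$ is unramified, hence of index dividing $l$ by the unramified case. That much is correct.

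The gap is exactly the one you flag and then try to wave past: knowing $\index(\alpha_{k(\tilde Y)})\mid l$ only gives $\index(\alpha)\mid [k(\tilde Y):k(X)]\cdot\index(\alpha_{k(\tilde Y)})\mid l^2$, which is useless. To salvage the plan you would need $\alpha_{k(\tilde Y)}=0$, i.e.\ that a single generic Kummer cover of degree $l$ splits $\alpha$. There is no argument for this in your proposal, and there should not be one: producing a degree-$l$ cyclic splitting field is essentially the assertion that $\alpha$ is cyclic of degree $l$, which is a stronger (and not a priori available) statement than period$=$index. Your suggested ``pair the Kummer class of $s$ against the symbols presenting $\alpha$'' has no mechanism behind it; you cannot in general choose one element $s$ so that a cyclic cover by $\sqrt[l]{s}$ kills an arbitrary ramified class.

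The paper's reduction avoids splitting $\alpha$ on any fixed cover. It instead builds a one-parameter family of covers $W'\subset L\times\AA^1$ cut out by $T^l = t\,s_1^l+(1-t)\,s_0$, where $s_0$ is the section with zeros $D+E$ you constructed and $s_1$ has zeros $D+E'$, with $s_1^l,s_0\in H^0(X,\Lscr^l)$. At $t=0$ one recovers your cyclic cover $Y$; at $t=1$ the equation $T^l=s_1^l$ factors completely and the fiber is a union of $l$ copies of $X$. Resolving the $A_{l-1}$-singularities fiberwise over a neighborhood of $0$ and throwing away all but one copy of $X$ at $t=1$ yields a smooth family $W\to\AA^1$ whose generic geometric fiber carries $\alpha$ unramified (so the unramified case applies and gives index $\le l$ there), and whose fiber over $1$ is birational to $X$. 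The specialization result, Theorem~\ref{ref-3.1-0}, then transports the index bound from the generic fiber down to $k(X)$. The point is that the ``cover'' at $t=1$ is the split cover $X\sqcup\cdots\sqcup X$, so one never needs $\alpha$ to die after the field extension; one only needs the index on the nearby smooth twisted fiber. This degeneration-plus-specialization is the idea your proposal is missing.
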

\begin{proof} It is standard that we may reduce to the case that
  $\period(\alpha)$ is prime $l$, different from the characteristic
  (we could already have done this earlier but for the unramified case
  does not simplify the proof). To avoid trivialities we assume
  $\alpha\neq 0$.

To start the proof we  construct morphisms
\[
\begin{CD}
W @>g>> X\\
@VfVV\\
\AA^1
\end{CD}
\]
of smooth connected varieties with $\dim W=3$ and with the following properties.
\begin{enumerate}
\item The morphism $f:W\r \AA^1$ is smooth. 
\item The generic geometric fiber $W_{\bar{\eta}}$ of $f:W\r \AA^1_k$ is
projective and connected. 
\item The fiber $W_1$ of $f$ in $1\in \AA^1_k$ is non-empty and the 
restriction of $g:W\r X$  to $W_1$ is birational.
\item The inverse image $r^\ast(\alpha)$ of $\alpha$ through the
composition $r:W_{\bar{\eta}}\r W_\eta\r W\r X$ is unramified on the
surface $W_{\bar{\eta}}$. 
\end{enumerate}
Assume we have constructed such morphisms. By the unramified we know
that $(\alpha)_{k(W_{\bar{\eta}})}$ has index $1$ or $l$. We can pull
this back to a finite extension of $k(\AA^1)$. Hence there exists a
finite extension $L/k(\AA^1)$ such that the extension of $\alpha$ to
$k(W_\eta\times_{k(\AA^1)}L)$ has index $1$ or $l$. Now $L$ is the
function field of a smooth curve $C$ such that there is a finite map $C\r \AA^1$. Hence
$k(W_\eta\times_{k(\AA^1)}L)$ is the function field of
$W'=C\times_{\AA^1} W$. So we have shown that the image of $\alpha\in
\Br(k(X))$ in $k(W')$ has index $1$ or $l$.  So what is left to do is to
construct a discrete valuation on $k(W')$ with residue field $k(W)$.
To do this we take a point $c\in C$ lying above $0\in \AA^1$. Then the
discrete valuation we want is the one associated to the divisor
$c\times_{\AA^1} W$.

\medskip

So what remains to be done is to construct the morphisms as indicated in
the diagram.

Let $D\subset X$ be the ramification divisor of $\alpha$. By
resolution of singularities we may assume that $D$ has normal
crossings. We claim that we can find smooth effective
divisors $E,E'$ such that $D+E$ has normal crossings and such that
\[
D+E\sim l(D+E')
\]
We choose  an ample line bundle $\Mscr$ on $X$. For $r$ sufficiently big
we have that $\Oscr_X((l-1)D)\otimes \Mscr^{rl}$ and $\Mscr^r$ are
very ample. Let $E'$ be the zeroes of a generic section of $\Mscr^r$. Then $E'$
is smooth by Bertini. Let $E$ be the zeroes of a generic section
of $\Oscr_X((l-1)D)\otimes \Mscr^{rl}$. Then $E$ is smooth and $D\cap E$
is smooth as well by Bertini (thus $D+E$ has normal crossings). In the last
application of Bertini we have used that a generic hyperplane section
will miss the finite number of singular points of $D\cap E$ \cite[II.8.18.1]{H}.
We now have isomorphisms
\begin{align*}
\Oscr_X(D+E')&\cong \Oscr_X(D)\otimes\Mscr^r\overset{\text{def}}{=}\Lscr \\
\Oscr_X(D+E)&\cong \Oscr_X(lD)\otimes\Mscr^{lr}=\Lscr^l
\end{align*}
Taking the images of $1$ under both isomorphisms we get sections $s_1
\in H^0(X,\Lscr)$, $s_0\in H^0(X,\Lscr^l)$ with zeroes respectively $D+E'$
and $D+E$. Note that $s_1^l$ and $s_0$ are both sections of $H^0(X,\Lscr^l)$.

As before we define 
\[
\Bscr=\bigoplus_{i\ge 0} \Lscr^{-i}T^i
\]
\[
L=\underline{\Spec}\, \Bscr
\]
We have
\[
L\times \AA^1=\underline{\Spec}\,\Bscr[t]
\]
and we put
\[
s_t=ts_1^l+(1-t)s_0
\]
Let $W'$ as the closed subscheme of $L\times \AA^1$ defined by 
$(T^l-s_t)\Lscr^{-l}\subset \Bscr[t]$. Completely analogous to the 
unramified case we get a diagram
\[
\begin{CD}
W' @>\pr_1>> L @>>> X\\
@V\pr_2 VV\\
\AA^1
\end{CD}
\]
We let the composition of the horizontal arrows be $g'$ and the vertical
arrow is $f'$.  As before we find that $f'$ is flat and proper. 

If we take the fiber of $W'\r X\times \AA^1$ at the generic point of
$X\times \AA^1$ we get
\[
\Spec k(X)(t)(\sqrt[l]{s_t})
\]
(to make sense of this one chooses an arbitrary trivialization 
$\Lscr_\eta\cong \Oscr_{X,\eta}$ at the generic point of $X$; in this
way $s_0,s_1$ become elements of $k(X)$).

Let $\tilde{D}=D\times \AA^1$. The valuations of $s_0, s^l_1$ for the
discrete valuation defined by an irreducible component of $\tilde{D}$ are
respectively $1$ and $l$. Hence $s_t$ has valuation $1$. Thus $s_t$ is
not an $l$'th power and hence $k(X)(t)(\sqrt[l]{s_t})$ is a field.
The same argument works if we replace $k(t)$ by a finite extension $L/k(t)$.
Hence we deduce that $f':W'\r \AA^1$ is generically geometrically irreducible. 

Let us now discuss the fibers of $0$ and $1$ of $f'$.
\begin{enumerate}
\item $(f')^{-1}(0)$ has equation $T^l=s_0$. Hence it is a degree $l$
cover of $X$, ramified in $D+E$.
\item $(f')^{-1}(1)$ has equation $T^l-s_1^l=0$ which factors in $l$
linear factors. We deduce that $(f')^{-1}(1)$  is of the form
\[
X_1\cup\cdots \cup X_l
\]
where $X_i\cong X$. The singular locus of $(f')^{-1}(1)$ is the locus
where $s_1$ is zero, i.e. $D+E'$. 
\end{enumerate}
We know that both $s_0$ and $s_1$ are zero on $D$. Hence $s_t$ is zero
on $\tilde{D}=D\times \AA^1$. The divisor defined by $s_t$ 
in $W'$ is of the form
\[
\tilde{D}+E_t
\]
There is an open neighborhood $U$ of $0$ where the fibers of $E_t$ and
$E_t\cap \tilde{D}$ have respectively dimension one and zero (this follows from
upper-semicontinuity of fiber dimension for proper morphisms; e.g.
combine \cite[I.8 Cor 3]{Mumford1} with the fact that proper morphisms
are closed \cite[Ch II]{H}). Since $X$ is regular it follows that $E_t$
and $E_t\cap D$ are defined locally by regular sequences and hence are Cohen-Macaulay.
In particular they have no embedded components. From  this one deduces that
$E_t/U$ and $D\cap E_t/U$ are flat (as over $k[t]$ torsion free modules are flat). 
By openness of smoothness for flat maps \cite[Thm 17.5.1]{EGAIV} we may shrink 
$U$ in such a way that $E_t$ and $E_t\cap\tilde{D}$ are smooth. In other words
the fibers of $E_t+\tilde{D}$ have normal crossings with smooth components.

\medskip

The equation of $(f')^{-1}(U)$ is (locally) $T^l-s_t$ and the zeroes
of $s_t\mid f^{-1}(U)$ are $(E_t+\tilde{D})\mid U$. Thus the
singularities of $(f')^{-1}(U)$ form a $1$-dimensional family of $A_{l-1}$
singularities. Using $\lceil \frac{l-1}{2}\rceil$ blowups of the closure of
the singular locus we may simultaneously resolve the fibers of
$f^{-1}(U)\r U$. We end up with a probably singular but integral
threefold $\tilde{f}:\tilde{W'}\r W'$ such that $\tilde{f}^{-1}(U)\r
U$ is smooth.

Now note that the ideals we have blown up define subvarieties of 
dimension 1. Since all components of $(f')^{-1}(1)$ are 
of dimension $2$ this means that $\tilde{f}^{-1}(1)$ will consist of
the strict transform of $(f')^{-1}(1)$ plus possibly some exceptional components. 

Now let $W''$ be obtained from $\tilde{W}'$ by removing from the fiber of
$1$ all components, except the strict transform of one
components of $W_1$. Finally let $W$ be the smooth locus of $W''$.
Then $W$ has all desired properties except that we still need to show
that $W_{\bar{\eta}}$ splits $\alpha$.

We have the diagram of maps
\[
W_{\bar{\eta}}=\tilde{W}'_{\bar\eta}\r W'_{\bar{\eta}}\r (X\times \AA^1)_{\bar{\eta}}
\]
Here $W'_{\bar{\eta}}/(X\times \AA^1)_{\bar{\eta}}$ is a cyclic
covering which is ramified in a normal crossing divisor with smooth
components which contains the ramification locus of $\alpha$ and
$\tilde{W}'_{\bar\eta}$ is the canonical desingularization of
$W'_{\bar{\eta}}$ obtained by repeatedly blowing up the singular
locus.  It now suffices to invoke Proposition \ref{ref-5.7-10}.
\end{proof}

\def\cprime{$'$} \def\cprime{$'$} \def\cprime{$'$}
\ifx\undefined\bysame
\newcommand{\bysame}{\leavevmode\hbox to3em{\hrulefill}\,}
\fi

\end{document}